\numberwithin{equation}{section}
\theoremstyle{plain}
\newtheorem{theorem}{Theorem}[section]
\newtheorem{step}{Step}
\newtheorem{lemma}[theorem]{Lemma}
\newtheorem*{de-lemma}{Lemma}
\newtheorem{corollary}[theorem]{Corollary}
\theoremstyle{remark}
\theoremstyle{definition}
\DeclareMathOperator{\supp}{supp}
\newcommand{\dd}{\mathrm{d}}
\newcommand{\R}{\mathbb{R}}
\newcommand{\ve}{\epsilon}
\begin{document}

\title[Phase transition model]{Theory of light-matter interaction in nematic liquid crystals and the second Painlev\'e equation}

\author{Marcel G. Clerc}
\address{Departamento de F\'{i}sica, FCFM, Universidad de Chile, Casilla 487-3, Santiago,
Chile.}
\email{marcelclerc@gmail.com}
\thanks{M.G. Clerc was partially supported by Fondecyt 1150507.}

\author{Juan Diego D\'avila}
\address{Departamento de Ingenier\'{\i}a Matem\'atica and Centro
de Modelamiento Matem\'atico (UMI 2807 CNRS), Universidad de Chile, Casilla
170 Correo 3, Santiago, Chile.} 
\email{jdavila@dim.uchile.cl}
\thanks{J.D\'avila was partially supported by Fondecyt 1130360 and Fondo Basal CMM-Chile}

\author{Micha{\l } Kowalczyk}
\address{Departamento de Ingenier\'{\i}a Matem\'atica and Centro
de Modelamiento Matem\'atico (UMI 2807 CNRS), Universidad de Chile, Casilla
170 Correo 3, Santiago, Chile.}
\email {kowalczy@dim.uchile.cl}
\thanks{M. Kowalczyk was partially supported by Chilean research grants Fondecyt 1130126, Fondo Basal CMM-Chile}

\author{Panayotis Smyrnelis}
\address{Centro
de Modelamiento Matem\'atico (UMI 2807 CNRS), Universidad de Chile, Casilla
170 Correo 3, Santiago, Chile.}
\email{psmyrnelis@dim.uchile.cl}
\thanks{P. Smyrnelis was partially supported by Fondo Basal CMM-Chile and Fondecyt postdoctoral grant 3160055}

\author{Estefania Vidal-Henriquez}
\address{Max Planck Institute for Dynamics and Self-Organization, 
Am Fassberg 17, D-37077 G{\"o}ttingen, Germany.}
\email{tefavidal@gmail.com}
\thanks{E. Vidal-Henriquez was partially supported by a  Master fellowship  CONICYT  
221320023 and  DPP of the University of Chile}


\begin{abstract}
We study global minimizers of an energy  functional arising as a thin sample limit in the theory of light-matter interaction in nematic liquid crystals. We show that depending on the parameters various defects are predicted by the model. In particular we show existence of a new type of topological  defect which we call the {\it shadow kink}. 
Its local profile is described by the second Painlev\'e equation. As part of our analysis we find new solutions to this equation thus generalizing  the well known result of Hastings and McLeod \cite{MR555581}.  
\end{abstract}

\maketitle

\section{Introduction}

\subsection{Physical motivation}

\label{sec:motivation}

In a suitable experimental set up \cite{Barboza2012,Barboza2015, Barboza2015A, LCLV-Vortex2015,PhysRevLett.111.093902, clerc2} involving a liquid crystal sample, a laser and a photoconducting cell one can observe light defects  such as kinks, domain walls and vortices. A concrete example of formation of optical vortices is presented in \cite{clerc2}.

To describe the energy of  the illuminated liquid crystal light valve (LCLV) filled with a negative dielectric 
nematic liquid crystal which is  homeotropically  anchored, we consider  the Oseen-Frank model in the vicinity of the
Fr\'{e}edericksz transition.
Denoting the molecular director by $\vec{n}$ the Oseen-Frank  energy is given by 
\cite{DeGennes}
\begin{equation}
\label{1.4}
\mathcal F=\int \frac{K_1}{2} (\nabla \cdot \vec n)^2+\frac{K_2}{2} \left(\vec n \cdot(\nabla \times \vec n)\right)^2+\frac{K_3}{2} \left(\vec n\times (\nabla\times \vec n)\right)^2-\frac{\varepsilon_a}{2} (\vec E\cdot \vec n)^2.
\end{equation}
where $\{K_1,K_2,K_3\}$ are, respectively,  the splay, twist, and bend  elastic constants of the nematic liquid crystal and  $\varepsilon _{a}$ anisotropic dielectric constant ($\varepsilon _{a}<0$). We will neglect the anisotropy i.e we will assume that $K_1=K_2=K_3=K$. 
Under uniform illumination $\vec E= [V_0+a I]/d$ $\hat z $, 
where $V_0$ is the voltage applied to the LCLV,  $d$ thickness of the cell, $I$ intensity of the illuminating light beam, and
$a$ is a phenomenological dimensional parameter
that describes the linear response of the photosensitive wall \cite{ResidoriReport}.
The homeotropic state, $\vec{n}=\hat{z}$, undergoes a stationary instability
for critical values of the voltage which match the Fr\'eedericksz transition
threshold $V_{FT}=\sqrt{-K\pi ^{2}/\varepsilon _{a}}-a I$. 

Illuminating the liquid crystal light valve with a 
Gaussian beam induces a voltage drop with a bell-shaped profile
across the liquid crystal layer, higher in the center 
of the illuminated area. The electric field within  the thin sample   takes the form \cite{Barboza2015}
\begin{equation}
\vec{E} = E_z\hat{z}+ E_r \hat{r} \equiv\dfrac{\left[V_0+a I(r) \right]}{d} \hat{z} 
+\dfrac{z a}{d\omega} I'(r) \hat{r}, 
\end{equation}
where $r$ is  the radial coordinate centered on the beam, $\hat{r}$ the unitary radial vector,
$I(r)$  the intensity of Gaussian light beam, $I(r)=I_0e^{-r^2/2\omega^2}$,
$I_0$  the peak intensity, and $\omega$ the width of the light beam.

If  the intensity of the light beam is sufficiently  close to the Fr\'eedericksz transition the director  is slightly tilted from the $\hat z$ direction and 
one can use the following ansatz 
\begin{equation}
\label{ans_n}
\vec{n} (x,y,z)\approx\left(
\begin{array}
[c]{c}
n_1(x,y,\pi z/ d)\\
n_2(x,y,\pi z/ d)\\
1-\frac{(n_1^{2}+n_2^{2})}{2}
\end{array}
\right).
\end{equation}
Introducing the above ansatz  in the energy functional  $\mathcal F$ and taking the limit of the thickness of the sample $d\to 0$  
one obtains  the following problem (written here for simplicity in  a non dimensional form)
 \cite{Frisch_2, Barboza2012,Barboza2015} 
%
\begin{equation}
\label{E-FGL_1}
G(u)=\int_{\R^2} \frac{\ve}{2}|\nabla u|^2-\frac{1}{2\ve} \mu(x,y) |u|^2+\frac{1}{4\ve} |u|^4-a \left(f_1(x,y) u_1+f_2(x,y)u_2\right),
\end{equation}
where $u=(u_1, u_2)\colon \R^2\to \R^2$ is an order parameter describing the tilt of $\vec{n}$ from the $\hat z$ direction  in the thin sample limit, $\epsilon\ll 1$ is  proportional to  $K$ and in radial co-ordinates  
\begin{equation}\label{Pas}
\mu(x,y)= e^{\,-r^2}-\chi, \qquad f(x,y)=-\frac{1}{2}e^{\,i\theta}\frac{d}{dr}[e^{\,-r^2}-\chi]= e^{\,i\theta} re^{\,-r^2}, \qquad (x,y)=re^{\,i\theta},
\end{equation}
and $\chi\in (0,1)$  is a fixed constant. 
The function $\mu$ describes light intensity and is sign changing due  to the fact that the light 
is applied to the sample locally and  areas where $\mu<0$ are interpreted as  shadow zones 
while  areas where $\mu>0$ correspond to  illuminated zones.  The function $f$ describes 
the electric field induced by the light due to the photo conducting {{blue}wall} mounted 
on top of the sample \cite{LCLV-Vortex2015}. Experiments show that as the intensity of the 
applied laser  light represented here explicitly by the parameter $a$ increases, 
defects such as light vortices appear first on the border of the illuminated zone and then in its center. 
This {transition} takes places suddenly once a threshold value of $a$ is attained. 
At  large values of $a$ vortices  have local profiles resembling the profile of the standard 
vortex of degree {$+1$} in the Ginzburg-Landau theory. At low values of $a$ vortices 
are located in the shadow area (we call them shadow vortices) and their local profiles are 
very different than that of the standard ones. In particular while the amplitude of the 
standard vortex is of order $\mathcal O(1)$ in $\ve$ the amplitude of the shadow vortex  
is of order $\mathcal O(\ve^{1/3})$. This picture is confirmed experimentally, numerically 
and by formal calculations \cite{clerc2}. Currently new experiments are being designed in order to realize experimentally other types of defects, such as kinks or domain walls. In the context of the model energy (\ref{E-FGL_1}) this amounts to assuming that $u_2\equiv 0$ (domain walls) or $u=u(x)$ and $u_2\equiv 0$ (kinks). In the latter case the energy takes form
\begin{equation}
\label{funct 00}
E(u)=\int_{\R}\frac{\epsilon}{2}|u_x|^2-\frac{1}{2\epsilon}\mu(x)u^2+\frac{1}{4\epsilon}|u|^4-a f(x)u,
\end{equation}
with $\mu(x)$ and $f(x)$ given by:
\begin{equation}
\label{phys relev}
\mu(x)=e^{\,-x^2}-\chi, \quad \chi\in (0,1), \qquad f(x)=-\frac{1}{2}\mu'(x)=x e^{\,-x^2} ,
\end{equation}
where $\chi\in (0, 1)$ is fixed. 



In this paper we will study  global  minima of the problem (\ref{funct 00}). The energy $E(u)$  is a real valued, one dimensional version of   
$G(u)$, yet both show a remarkable qualitative agreement. This is not surprising in view of the fact that both of them come from taking the thin sample limit of the Oseen-Frank energy (\ref{1.4}).  {The} theoretical value of our study lies in understanding and explaining  the basic mechanism of formation of the  various types of defects  on the basis of the analogous mechanism for the the energy $E(u)$. In particular we will show  existence of a new type of defect,  the shadow kink,  appearing at the points where $\mu$ changes sign i.e. in the shadow area of the one dimensional model. Its analog for the energy $G$ is the shadow vortex \cite{clerc2} and here we make a first step in understanding its local profile via the second Painlev\'e equation.

The model of light-matter interaction  in nematic liquid crystals described above  has some similarities with the  model of the Bose-Einstein condensates in a rotating trap   based on  the Gross-Pitaevskii energy
\[
F(u)=\int_{\R^2} \frac{1}{2}|\nabla u|^2 +\frac{1}{2\ve^2} V(x)|u|^2+\frac{1}{4\ve} |u|^4- \Omega x^\perp\cdot  (iu, \nabla u) \quad \mbox{subject to}\quad \|u\|_{L^2}=1 ,
\] 
where $\Omega\in \R$ is the angular velocity,  $ (iu, \nabla u)=i u\nabla \bar u -i\bar u \nabla u$ and $V(x)= x_1 +\Lambda x_2$ is a harmonic trapping potential (more general  nonnegative, smooth $V$ are considered as well).  
The role played in $G(u)$ or $E(u)$ by the parameter $a$ is played here by the angular velocity, whose  threshold values correspond to emergence of  global minimizers of different nature. 
When  $\Omega=\mathcal O(|\ln \ve|)$ is below a critical value $\Omega_1$  
global minimizers are   vortex free \cite{Ignat2006260,MR2772375}, while at some 
other critical values $\Omega_2>\Omega_1$ global minimizers have at least one vortex \cite{Ignat2006260,Millot_energyexpansion}, which looks locally like the radially 
symmetric degree $\pm 1$ solution to the Ginzburg-Landau equation 
\[
\Delta u +u(1-|u|^2)=0, \qquad \mbox{in}\ \R^2.
\]
At still higher values of $\Omega=\mathcal O(\frac{1}{\ve})$ the so called giant vortex 
becomes the equilibrium state of the Bose-Einstein condensate \cite{aftalion5} 
(see also \cite{MR2186426}). All these {localized} structures 
have exact analogues for our one dimensional model. This could be surprising at 
first so let us explain this point. Due to {the} mass  constraint we can recast the Gross-Pitaevskii energy  in the form somewhat similar to $G$
\begin{equation}
F(u)=\int_{\R^2} \frac{1}{2}|\nabla u|^2 +\frac{1}{4\ve^2}\left[\left(|u|^2-a(x)\right)^2-\left(a^-(x)\right)^2\right]^2- \Omega x^\perp\cdot  (iu, \nabla u), 
\label{gp 1}
\end{equation}
where $a(x)=a_0-V(x)$, $a_0$ is determined so that $\int_{\R^2} a^+=1$ and $a^\pm$ 
are the positive and negative parts of the function  $a$. 
Additionally, the  splitting of this functional corresponding to density and phase of $u$  
found in  \cite{LM} shows that on the nonlinear level the two models should have {many properties} in common. To get an idea 
of what we have in mind let us demonstrate the similarity between the case  
when $a=0$ in $E$ and $\Omega=0$ in $F$. The former problem becomes to minimize
\[
E(u)=\int_{\R}\frac{\epsilon}{2}|u_x|^2-\frac{1}{2\epsilon}\mu(x)u^2+\frac{1}{4\epsilon}|u|^4
\]
and the latter to minimize
\[
F(u)=\int_{\R^2} \frac{1}{2}|\nabla u|^2 +\frac{1}{4\ve^2}\left[\left(|u|^2-a(x)\right)^2-\left(a^-(x)\right)^2\right]^2.
\]
Intuitively the global minimizers  should be respectively:  $u=\sqrt{\mu^+}$ and $u=\sqrt{a^+}$ 
(this is the Thomas-Fermi limit of Bose-Einstein condensate).  The problem is that both 
of this functions are not smooth at their zero level sets. Because of this  
the true minimizers will exhibit  a boundary  layer
{behavior}  near the zero level set of $a^+$ or $\mu$ and their local profiles, after suitable scaling,  are given by the unique, positive solution of the 
second Painlev\'e equation \cite{MR555581}
\begin{equation}
\label{PII0}
y'' -xy-2y^3=0, \qquad \mbox{in}\ \R,\\
\end{equation}
such that 
\begin{equation}
y(x)\to 0, \quad x\to \infty, \quad y(x)\sim \sqrt{-x/2}\quad x\to -\infty.
\label{PII asymp}
\end{equation}
This phenomenon is also known as the corner layer and it is present in the context of the Bose-Einstein condensates \cite{MR2062641,MR3355003} as well as in   many other problems,  see for example \cite{alikoakos_1,sourdis1,sourdis0,sourdis2,sourdis3}.  In the next section we will see that the shadow kink, which is the one dimensional analog of the shadow vortex and is the global minimizer of $E(u)$ is described  locally by a solution of the second Painlev\'e equation
\begin{equation}
\label{PII}
y''-xy-2y^3-\alpha=0, \qquad \mbox{in}\ \R,
\end{equation}
with $\alpha\neq 0$ leading to a quite different behaviour than the corner layer. Equation (\ref{PII}) has been  studied by Painlev\'e and others since the early 1900's and is a part of a hierarchy of the Painlev\'e equations, which in turn is a part of a larger hierarchy of equations characterised by the fact that the only movable singularities of their solutions are poles (see for example the monograph \cite{book:1414949}). One of the most interesting aspects of these equations is how ubiquitous they are in applications. To  mention a few examples besides the Bose-Einstein condensates discussed above: the problem of finding self-similar solution of the KdV equation is reduced to (\ref{PII}) by a change of variables (see \cite{MR1149378} and \cite{Flaschka1980} for  more about the connection of (\ref{PII}) with the theory of integrable systems); the theory of random matrices \cite{2006math.ph...3038D}; superconductivity \cite{doi:10.1137/S0036139993254760} \cite{helffer1998}, \cite{PALAMIDES2003177}; for even more  applications we refer to \cite{0951-7715-16-1-321}, \cite{KUDRYASHOV1997397},   \cite{Senthilkumaran20103412} and the references therein. 

In  view of this discussion  existence of the shadow kink should have consequences that go beyond the one dimensional model (\ref{funct 00}) considered here. Indeed our result suggests that   (\ref{PII}) with $\alpha\neq 0$ should play an important role  in various boundary layer phenomena and for this it is necessary to understand special solutions of the Painlev\'e equation beyond the case $\alpha=0$. In fact one of our contributions in this paper is to find new solutions  of  (\ref{PII})   as we explain below. Furthermore, the analogy between the problem of  minimization of the energy functionals  $E$ and  $G$, on the one hand, and formal relation between  $E$ and the Gross-Pitaevski energy functional, on the other hand, suggest that the behaviour of the Bose-Einstein condensates between the threshold values of the angular velocity $\Omega_1<\Omega_2$ is described by a new type of  topological defect, the shadow vortex.  Therefore   it  is important to show rigorously existence of shadow vortices for the energy $G$ and here we make the first step in this direction considering a simpler case of the  energy $E$. 

To explain this let us  briefly discuss  one of the  results of this paper which deals directly with the second Painlev\'e equation (\ref{PII}) and shows existence of a new type of solution. In \cite{MR555581} Hastings and McLeod considered (\ref{PII0}) and showed existence of  a unique solution with (\ref{PII asymp}) as the asymptotic conditions at $\pm \infty$. Here we give another proof of the existence part of this  result based on the fact that when $a=0$ in (\ref{funct 00}) we can identify the local profile of the global minimizer of $E$ in the singular limit $\epsilon \to 0$. In fact our method allows as well to treat  equation (\ref{PII}) and to obtain existence of a generalized solution of Hastings-McLeod (see Theorem \ref{th3n} below). To our knowledge this  result, which was conjectured on the basis of numerical simulations in \cite{MR2367412}, is new. This new solution of the second Painlev\'e equation gives formally the local profile of the shadow vortex which is different from the corner layer type of behaviour determined by (\ref{PII0}). We conjecture that minimizers of the Gross-Pitaevski energy in the intermediate regime $\Omega_1<\Omega<\Omega_2$ may also have similar profile near the zero level set of the function $a(x)$ in (\ref{gp 1}). 

\subsection{Statements of the main results}
\label{main-results}

\

More generally than in (\ref{phys relev}) in what follows we assume that:
\begin{align}
\label{hyp3}
\left\{
\begin{aligned}
&\text{$\mu \in C^1(\R) \cap L^\infty(\R)$ is even, $\mu'<0$ in $(0,\infty)$, and $\mu(\xi)=0$ for a unique $\xi>0$,
}\\
& \text{$f \in L^1(\R)\cap L^\infty(\R)\cap C(\R)$ is odd, $f(x)>0$, $\forall x>0$.}
\end{aligned}
\right.
\end{align}
The assumption  that $\mu$ is even is made here  for the sake of simplicity.  Our statements can easily be adjusted if $\mu'<0$ in $(0,\infty)$, $\mu'>0$ in $(-\infty,0)$, $\mu(\xi)=0$ for a unique $\xi>0$, $\mu(\xi')=0$ for a unique $\xi'<0$.

We consider the energy
\begin{equation}\label{funct 0}
E(u)=\int_{\R}\left(\frac{\epsilon}{2}|u'(x)|^2-\frac{1}{2\epsilon}\mu(x)u^2(x)+\frac{1}{4\epsilon}|u(x)|^4-a f(x)u(x)\right)\dd x, \ u \in H^1(\R).
\end{equation}
In this paper we will keep $a\geq 0$ fixed and $\ve\ll 1$. 
Under assumptions \eqref{hyp3}, there exists $v \in H^1(\R)$ such that $E(v)=\min_{H^1(\R)} E$. In addition, $v\in C^2(\R)$ is a classical solution of the O.D.E.
\begin{equation}\label{ode}
\epsilon^2 v''(x)+\mu(x) v(x)-v^3(x)+\epsilon a f(x)=0, \qquad \forall x\in \R.
\end{equation}
Note that due to the symmetries in \eqref{hyp3}, the energy \eqref{funct 0} and equation 
\eqref{ode} are invariant under the odd symmetry $v(x)\mapsto -v(-x)$.

Next we discuss the dependence of the global minimizer on $a$.
\begin{theorem}\label{th1n}
The following statements hold.
\begin{itemize}
\item[(i)]
When $a=0$ the global minimizer $v$ is even, and positive up to change of $v$ by $-v$.
\item[(ii)] For $a>0$, the global minimizer $v$ has a unique zero $\bar x$ such that
\begin{equation}\label{soph}
|\bar x|\leq \xi+\mathcal{O}(\sqrt{\ve}), \text{ and $v(x)>0$, $\forall x >\bar x$, while $v(x)<0$, $\forall x<\bar x$.}
\end{equation}
\item[(iii)]
Suppose that 
\begin{equation}
a^*:=\sup_{x\in[-\xi,0)} \frac{\sqrt{2}\big((\mu(0))^{3/2}-(\mu(x))^{3/2}\big)}{3\int_x^0|f|\sqrt{\mu}}<\infty. 
\label{astar}
\end{equation}
For all $a>a^*$, $\bar x \to 0$ as $\ve\to 0$, and the global minimizer $v$ satisfies
\begin{equation}\label{cvn1}
\begin{aligned}
&\lim_{\ve\to 0} v(\bar x+\ve s)=\sqrt{\mu(0)}\tanh(s\sqrt{\mu(0)/2}), \medskip\\
&\lim_{\ve\to 0} v( x+\ve s)=\begin{cases}\sqrt{\mu(x)}  &\text{for } 0<x<\xi, \\
-\sqrt{\mu(x)}  &\text{for } -\xi<x<0, \\
0 &\text{for } |x| \geq \xi,
\end{cases}
\end{aligned}
\end{equation}
in  the $C^1_{\mathrm{loc}}(\R)$ sense. 
\item
[(iv)]
Let 
$$
a_*:=\inf_{x \in (-\xi,0]}\frac{\sqrt{2}(\mu(x))^{3/2}}{3\int_{-\xi}^x |f|\sqrt{\mu} }\in (0,\infty).
$$
Up to change of $v(x)$ by $-v(-x)$, for all $a\in (0, a_*)$, $\bar x\to -\xi$ as $\ve\to 0$,  and
\begin{equation}\label{cvn2}
\lim_{\epsilon\to 0} v(x+s\epsilon)=
\begin{cases}
\sqrt{\mu(x)}  &\text{for } |x|<\xi, \\
0 &\text{for } |x| \geq \xi,
\end{cases}
\end{equation} 
in  the $C^1_{\mathrm{ loc}}(\R)$ sense.
The above asymptotic formula holds as well when $a=0$. Moreover, when $f=-\frac{\mu'}{2}$ we have $a_*=a^*=\sqrt{2}$.  
\end{itemize}
\end{theorem}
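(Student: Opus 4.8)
The plan is to treat all four parts through a single variational lens: rescale the energy by $\epsilon$ and show that, to leading order, minimizers converge to the Thomas--Fermi profile $v\simeq\pm\sqrt{\mu^+}$ on $(-\xi,\xi)$ and vanish outside, while the selection of the sign pattern --- and hence the location of the zero $\bar x$ --- is decided only at the next order $\mathcal O(\epsilon)$ by a finite-dimensional reduced energy. Concretely, rewrite the bulk density in \eqref{funct 0} as $-\tfrac12\mu v^2+\tfrac14 v^4=\tfrac14(v^2-\mu)^2-\tfrac14\mu^2$, so that $\epsilon E(v)=\int_{\R}\big[\tfrac{\epsilon^2}{2}|v'|^2+\tfrac14(v^2-\mu)^2\big]-\tfrac14\int\mu^2-\epsilon a\int fv$. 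A single sign change at a point $x_0\in[-\xi,\xi]$ carries, after the scaling $x=x_0+\epsilon s$, the Allen--Cahn layer energy $\tfrac{2\sqrt2}{3}\mu(x_0)^{3/2}\cdot\epsilon$, while the linear term contributes $-2\epsilon a\int_{x_0}^{\xi}f\sqrt\mu$ (using that $f\sqrt{\mu^+}$ is odd, so $\int_{-\xi}^{\xi}f\sqrt\mu=0$). Thus the $\mathcal O(\epsilon)$ energy of a single-kink configuration is governed by
\[
J(x_0)=\tfrac{2\sqrt2}{3}\mu(x_0)^{3/2}-2a\int_{x_0}^{\xi}f\sqrt\mu ,
\]
which, since $\mu$ is even and $f$ odd, is itself even; the whole theorem reduces to locating $\arg\min J$.

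Parts (i) and (ii) I would establish first, as they fix the qualitative shape. For (i), replacing $v$ by $|v|$ leaves the energy unchanged when $a=0$, and the strong maximum principle applied to \eqref{ode} upgrades $|v|$ to a strictly positive solution; evenness then follows by a folding argument: writing $E(v)=E_+(v)+E_-(v)$ for the integrals over $(0,\infty)$ and $(-\infty,0)$, the even reflections of each half are admissible competitors, forcing $E_+(v)=E_-(v)$, $v'(0)=0$, and finally $v(x)=v(-x)$ by ODE uniqueness. For (ii), the term $-af(x)u$ makes it energetically favorable for $v$ to agree in sign with $f$; the single-crossing structure \eqref{soph} I would obtain from the rearrangement $v\mapsto\sgn(x-\bar x)|v|$ across an optimally chosen zero $\bar x$, which does not raise the energy, combined with the maximum principle and the nondegeneracy of zeros --- at a zero $z\ne0$, \eqref{ode} gives $v''(z)=-af(z)/\epsilon\ne0$ --- to exclude extra sign changes. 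The localization $|\bar x|\le\xi+\mathcal O(\sqrt\epsilon)$ comes from a barrier/comparison argument in the region $\{\mu<0\}$, where $v$ is forced to be exponentially small.

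The analytic core, and the main obstacle, is the rigorous $\mathcal O(\epsilon)$ expansion. The leading order is degenerate --- every sign pattern of $\pm\sqrt{\mu^+}$ has the same bulk energy $-\tfrac14\int\mu^2$ --- so the transition location is selected only at the next order, which I must extract sharply. The upper bound is built by gluing a tanh layer at $x_0$ to the Thomas--Fermi branches, giving $\epsilon E(v)\le-\tfrac14\int\mu^2+\epsilon J(x_0)+o(\epsilon)$. The matching lower bound is the delicate step: I would localize the excess energy near the sign change, use a Modica--Mortola (geodesic-distance) estimate to recover the exact constant $\tfrac{2\sqrt2}{3}\mu(x_0)^{3/2}$ together with the precise location, and simultaneously control the linear interaction $-\epsilon a\int fv$ to the same precision. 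The genuinely hard subcase is a transition at the edge $x_0=\pm\xi$, where $\mu$ vanishes and the tanh ansatz degenerates: there the layer obeys the Painlev\'e scaling $\epsilon^{2/3}$ and one must prove its energy is $o(\epsilon)$, so that edge transitions are essentially free. This is exactly what makes $J(\pm\xi)=0$ and underlies case (iv).

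Granting the expansion, parts (iii) and (iv) follow by minimizing $J$. When $a>a^*$ the inequality \eqref{astar} says precisely $J(0)\le J(x)$ for all $x$, since $J(0)-J(x)=\tfrac{2\sqrt2}{3}(\mu(0)^{3/2}-\mu(x)^{3/2})-2a\int_x^0|f|\sqrt\mu\le0$; hence $\bar x\to0$, the inner profile converges to the tanh in \eqref{cvn1} (the heteroclinic of $v_{ss}=-\mu(0)v+v^3$), and the outer profile to $\pm\sqrt{\mu^+}$, with $C^1_{\mathrm{loc}}$ convergence upgraded from $H^1$ convergence by elliptic estimates on \eqref{ode}. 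When $a<a_*$, the definition of $a_*$ gives $J(-\xi)=0\le J(x)$, so the minimizer stays positive, $\bar x\to-\xi$, and \eqref{cvn2} holds. Finally, for $f=-\mu'/2$ the integrals telescope: on $(-\xi,0)$ one has $|f|=\mu'/2$, so $\int_x^0|f|\sqrt\mu=\tfrac13(\mu(0)^{3/2}-\mu(x)^{3/2})$ and $\int_{-\xi}^x|f|\sqrt\mu=\tfrac13\mu(x)^{3/2}$, whence every quotient defining $a^*$ and $a_*$ equals $\sqrt2$, giving $a^*=a_*=\sqrt2$.
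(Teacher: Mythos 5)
Your variational skeleton for parts (i), (iii) and (iv) is essentially the paper's own proof: your reduced energy $J(x_0)=\tfrac{2\sqrt2}{3}\mu(x_0)^{3/2}-2a\int_{-\xi}^{x_0}|f|\sqrt{\mu}$ (for $x_0\le 0$) is exactly the paper's renormalized energy $\mathcal{E}(u)=E(u)+\int_{|x|<\xi}\mu^2/4\epsilon$ evaluated on a single-kink configuration; the upper bound is obtained there, as you propose, by gluing a tanh layer to the Thomas--Fermi branches with exponential tails (the test functions $\phi,\psi$ giving \eqref{qua}), and the matching lower bound comes from blowing up at $\bar x_n\to l$ and applying Fatou plus dominated convergence rather than a Modica--Mortola localization --- an equivalent but lighter device, since the compactness from the uniform bounds (Lemma \ref{s3}) lets one work at the limit point $l$ instead of proving a sharp $o(\epsilon)$ expansion uniformly in the kink location. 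Your edge observation that transitions at $\pm\xi$ cost $o(\epsilon)$, your strict-inequality deductions for $a>a^*$ and $a<a_*$, and the telescoping computation giving $a_*=a^*=\sqrt2$ when $f=-\mu'/2$ all match Step \ref{s6}. Two small omissions in (i): you must first rule out $v\equiv 0$ (the paper does this by exhibiting $\phi$ supported in $(-\xi,\xi)$ with $E(\phi)<0$ for $\epsilon\ll1$) before the maximum principle can give strict positivity of $|v|$, and the uniform $L^\infty$ bound you use implicitly throughout needs a statement (it is Lemma \ref{s3}).

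The genuine gap is part (ii). Your guiding claim that ``the whole theorem reduces to locating $\arg\min J$'' fails precisely there, because $J$ is blind to the existence of the zero: the zero-free competitor $\sqrt{\mu^+}$ has reduced energy $0$ (its linear term vanishes by oddness), which equals $J(-\xi)$, and the shadow kink's zero sits where $|v|=\mathcal O(\epsilon^{1/3})$, so its presence or absence perturbs the energy only at order $o(\epsilon)$. Hence existence of $\bar x$ for every $a>0$, which is what (ii) asserts and what (iii)--(iv) need even to be stated, cannot come out of the $\mathcal O(\epsilon)$ expansion; your substitutes do not close this: the rearrangement $v\mapsto\sgn(x-\bar x)|v|$ presupposes a zero, the heuristic that $-af u$ ``favors'' sign agreement gives no contradiction for a one-signed minimizer, and the assertion that $v$ is ``exponentially small'' on $\{\mu<0\}$ is false at distance $\mathcal O(\epsilon^{2/3})$ from $\pm\xi$ (the Painlev\'e layer of Theorem \ref{th2n}) and in any case smallness does not force a sign change. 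The paper's actual mechanism is a quantitative convexity/forcing argument (Claim 2): if $v(x_0)>0$ with $x_0\in[-\xi-1,-\xi]$, then $v'(x_0)>0$ (otherwise convexity of $v$ where $v\ge 0$, $\mu\le 0$ propagates $v\ge v(x_0)$ back to $-\infty$, contradicting decay), and on $[x_0,-\xi]$ equation \eqref{ode} gives $\epsilon^2 v''=v^3-\mu v-\epsilon a f\ge \epsilon a|f|$, so $v''\ge m'a/\epsilon$ with $m'=\min_{[-\xi-1,-\xi]}(-f)>0$; integrating twice yields $M\ge m'\tfrac{a}{\epsilon}\tfrac{(\xi+x_0)^2}{2}$, confining all positivity points below $-\xi$ to an $\mathcal O(\sqrt{\epsilon/a})$ neighborhood of $-\xi$. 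This simultaneously forces $v(-\xi-1)\le 0$ and $v(\xi+1)\ge 0$ for $\epsilon/a$ small (hence a zero exists) and delivers the localization $|\bar x|\le\xi+\mathcal O(\sqrt{\epsilon/a})$ of \eqref{soph}. Some argument of this quantitative ODE type must be added to your proposal; without it, part (ii) is unproven and the later parts rest on an undefined $\bar x$.
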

We observe that (\ref{astar})  holds for instance provided $\mu$ is twice differentiable at $0$, and $f'(0)>0$ cf. Step \ref{s6} of the proof below.

The preceding theorem justifies the name {\it shadow kink} for the global minimizer when $a\in (0, a_*)$.  Indeed, when $a>a^*$ the global minimizer has a profile of suitably re-scaled and modulated hyperbolic tangent. This is not surprising since $H(x)=\tanh(x/\sqrt{2})$ is a solution of the Allen-Cahn equation
\begin{equation}\label{ac}
H''+H-H^3=0, \qquad \mbox{in}\ \R,
\end{equation}
and it is a standard, local profile of topological defects such as kinks or domain walls appearing in many phase transition problems. On the other hand, when $a<a_*$ the zero of the global minimizer occurs near the  point where $\xi$ changes its sign i.e. between the illuminated zone and the dark zone in the nematic liquid crystal experiment. Because of this,  unlike in the case of the standard kink, the shadow kink is hard to  detect experimentally.

Next we will study local profiles of the global minimizers near the points $\pm \xi$, that is the zeros of $\mu$. Our goal is to show that the shadow kink is indeed different than the standard kink, and  its local profile near the point of sign change is nothing like the solution (\ref{ac}). We recall the second Painlev\'e equation
\begin{equation}\label{pain}
y''(s)-s y(s)-2y^3(s)-\alpha=0, \qquad \forall s\in \R.
\end{equation}
We will now define the notion of  minimal solutions of (\ref{pain}). Let us denote
\[
E_{\mathrm{P_{II}}}(u, I)=\int_I \left[ \frac{1}{2} |u'|^2 +\frac{1}{2}  s u^2 +\frac{1}{2} u^4+\alpha u\right]
\]
By definition  a  solution of (\ref{pain}) is minimal if
\[
E_{\mathrm{P_{II}}}(y, \mathrm{supp}\, \phi)\leq E_{\mathrm{P_{II}}}(y+\phi, \mathrm{supp}\, \phi)
\]
for all $\phi\in C^\infty_0(\R)$. This notion of minimality is standard for many problems in which the energy of a localized solution is actually infinite due to non compactness of the domain. 
\begin{theorem}[Local profile of the global minimizer]\label{th2n}
Let $v$ be the global minimizer of $E$ for $a\geq 0$, let $\mu_1:=\mu'(\xi)<0$, and let
\[
w^\pm(s)=\pm 2^{-1/2}(-\mu_1\ve)^{-1/3} v\Big(\pm \xi\pm\ve^{2/3} \frac{s}{(-\mu_1)^{1/3}}\Big).
\]
As $\ve\to 0$, the function $w^\pm$ converges in $C^1_{\mathrm{loc}}(\R)$ up to subsequence, to a bounded at $\infty$, minimal solution of \eqref{pain} with $\alpha=\frac{a f(\xi)}{\sqrt{2}\mu_1}<0$. 
\end{theorem}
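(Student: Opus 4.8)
The plan is to read this as a blow-up statement and follow the standard singular-perturbation route: rescale the Euler--Lagrange equation \eqref{ode} at the natural amplitude $\ve^{1/3}$ and length scale $\ve^{2/3}$ about $\pm\xi$, extract a locally convergent subsequence, and identify the limit as a minimal solution of \eqref{pain}. First I would insert $x=\xi+\ve^{2/3}(-\mu_1)^{-1/3}s$ and $v=\sqrt2\,(-\mu_1\ve)^{1/3}w^+$ into \eqref{ode}. Writing $'$ for $\tfrac{d}{ds}$, the chain rule gives $\ve^2 v''=\sqrt2\,(-\mu_1)\ve\,(w^+)''$ and $v^3=2^{3/2}(-\mu_1)\ve\,(w^+)^3$, while the expansions $\mu(x)=-(-\mu_1)^{2/3}\ve^{2/3}s+\mathcal O(\ve^{4/3}s^2)$ and $f(x)=f(\xi)+\mathcal O(\ve^{2/3}s)$ give $\mu v=-\sqrt2\,(-\mu_1)\ve\,s\,w^+(1+o(1))$ and $\ve a f=\ve a f(\xi)(1+o(1))$. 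Dividing the equation by $\sqrt2\,(-\mu_1)\ve$ produces
\[
(w^+)''-s\,w^+-2(w^+)^3-\frac{af(\xi)}{\sqrt2\,\mu_1}=R_\ve(s),
\]
so the limiting constant is exactly $\alpha=\frac{af(\xi)}{\sqrt2\,\mu_1}$ (which is $<0$ for $a>0$), with a remainder $R_\ve$ that is $o(1)$ uniformly on compact $s$-intervals once $w^+$ is bounded there. The computation for $w^-$ at $-\xi$ is identical: since $\mu$ is even and $f$ is odd, the sign built into the definition of $w^-$ compensates the change $f(-\xi)=-f(\xi)$ and yields the same equation with the same $\alpha$.

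The main obstacle, and the technical heart of the proof, is the a priori bound $|v(x)|\le C\ve^{1/3}$, which I expect to do the double duty of producing local compactness of the $w^\pm$ and the boundedness of the limit at infinity. I would obtain it from the maximum principle applied to \eqref{ode}. In the shadow half-line $x\ge\xi$ one has $\mu\le0$, so at any interior positive maximum $x_0$ the inequality $v''(x_0)\le0$ forces $v(x_0)^3\le\mu(x_0)v(x_0)+\ve a f(x_0)\le\ve a\|f\|_{L^\infty}$, i.e. $v(x_0)\le(\ve a\|f\|_{L^\infty})^{1/3}$; applying the same reasoning to $-v$ (or Kato's inequality) controls $|v|$ irrespective of sign. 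This single inequality is what exposes the $\ve^{1/3}$ scale. On a shrinking two-sided neighborhood $|x-\xi|\le R\ve^{2/3}(-\mu_1)^{-1/3}$ one has $0\le\mu\le C_R\ve^{2/3}$, and the same estimate gives $v(x_0)^3\le C_R\ve^{2/3}v(x_0)+\ve a\|f\|_{L^\infty}$, hence $|v|\le K_R\ve^{1/3}$ there. The delicate point is to turn these interior-maximum bounds into a clean comparison that also controls the endpoint value $v(\xi)$ and holds uniformly on all of $[\xi,\infty)$ (so that one may compare $v$ with the constant supersolution $\bar v=C\ve^{1/3}$, $C^3\ge a\|f\|_{L^\infty}$); making this rigorous and uniform in $\ve$ is where most of the work lies, and I would use Theorem \ref{th1n} to control the position of the zero $\bar x$ and the sign of $v$ near $\pm\xi$ so that the boundary cases of the maximum principle cause no trouble. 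Translated through the rescaling, these bounds read $|w^+(s)|\le C'$ for all $s\ge0$ and $|w^\pm(s)|\le C'_R$ on $[-R,R]$, both uniform in $\ve$.

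With these bounds, compactness is routine: from $(w^\pm)''=s\,w^\pm+2(w^\pm)^3+\alpha+R_\ve$ together with $|w^\pm|\le C_R$ on $[-R,R]$ one gets $|(w^\pm)''|\le C_R$ there, so $\{w^\pm_\ve\}$ is bounded in $C^2_{\mathrm{loc}}(\R)$; by Arzel\`a--Ascoli a subsequence converges in $C^1_{\mathrm{loc}}(\R)$ to some $y$, and passing to the limit in the equation (using $R_\ve\to0$ locally uniformly) shows $y$ solves \eqref{pain} classically with the stated $\alpha$. The uniform bound $|w^+(s)|\le C'$ for $s\ge0$ passes to the limit as $|y(s)|\le C'$ for $s\ge0$, i.e. $y$ is bounded at $+\infty$, which is the asserted boundedness (as $s\to-\infty$ one instead reads off $w^+(s)\sim\sqrt{-s/2}$ from $v\sim\sqrt{\mu^+}$, the Hastings--McLeod growth).

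Finally, for minimality I would transport the global minimality of $v$ through the same change of variables at the level of the energy. Substituting into \eqref{funct 0} and using the expansions of $\mu$ and $f$, the portion of $E$ carried by a fixed compact $s$-interval $I$ equals $2(-\mu_1)\ve\big(E_{\mathrm{P_{II}}}(w^\pm,I)+o(1)\big)$, the prefactor $2(-\mu_1)\ve$ being precisely what makes the four rescaled integrands reproduce $\tfrac12|(w^\pm)'|^2+\tfrac12 s(w^\pm)^2+\tfrac12(w^\pm)^4+\alpha w^\pm$. Given $\phi\in C_0^\infty(\R)$, the competitor $w^\pm\mapsto w^\pm+\phi$ corresponds to $v\mapsto v+\psi_\ve$ with $\psi_\ve(x)=\sqrt2\,(-\mu_1\ve)^{1/3}\phi\big((-\mu_1)^{1/3}\ve^{-2/3}(x-\xi)\big)$, supported in a shrinking neighborhood of $\xi$; since $v$ minimizes $E$ over $H^1(\R)$ we have $E(v+\psi_\ve)-E(v)\ge0$, and this difference localizes to $\supp\psi_\ve$. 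Dividing by $2(-\mu_1)\ve$ gives
\[
E_{\mathrm{P_{II}}}(w^\pm+\phi,\supp\phi)-E_{\mathrm{P_{II}}}(w^\pm,\supp\phi)+o(1)\ge0,
\]
and letting $\ve\to0$ along the convergent subsequence (so that $w^\pm\to y$ in $C^1$ on $\supp\phi$) yields $E_{\mathrm{P_{II}}}(y+\phi,\supp\phi)\ge E_{\mathrm{P_{II}}}(y,\supp\phi)$, i.e. $y$ is a minimal solution. The one point requiring care is that the error terms in this energy expansion are genuinely $o(\ve)$ uniformly on $\supp\phi$, which again rests on the uniform amplitude bound from the second step.
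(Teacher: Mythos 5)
Your overall skeleton is exactly the paper's route: rescale \eqref{ode} and the energy at amplitude $\ve^{1/3}$ and length $\ve^{2/3}$ about $\pm\xi$, use uniform bounds plus Arzel\`a--Ascoli to get $C^1_{\mathrm{loc}}$ convergence along a subsequence, identify the limit equation, and transport global minimality through the rescaled energy (the paper's $\tilde E=\frac1\ve E$, your prefactor $2(-\mu_1)\ve$ -- your algebra for $\alpha=\frac{af(\xi)}{\sqrt2\,\mu_1}$ and for the four rescaled integrands checks out, and the minimality passage via compactly supported competitors is the same argument the paper reruns from the proof of Theorem \ref{th1n}).

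The genuine gap is the a priori bound, which you correctly call the technical heart but do not actually prove. First, the bound as you state it, $|v(x)|\le C\ve^{1/3}$, is false: in the illuminated zone $v\to\sqrt{\mu}$ and is of order one. What is needed -- and what the paper establishes in Lemma \ref{l2} -- is the matched bound $|v(x)|\le K\big(\sqrt{\max(\mu(x),0)}+\ve^{1/3}\big)$ on all of $\R$, which is what makes $w^{\pm}$ uniformly bounded on every half-line $[s_0,\infty)$, \emph{including} $s_0<0$, i.e.\ on the left of $\xi$ at scale $\ve^{2/3}$, where your compactness step needs it. Second, your maximum-principle sketch cannot close: on the shrinking interval $[\xi-R\ve^{2/3},\xi]$ the maximum of $v$ may sit at the left endpoint, where the only available information is the $O(1)$ bound of Lemma \ref{s3}, and your comparison with the constant supersolution $C\ve^{1/3}$ on $[\xi,\infty)$ requires precisely the boundary datum $v(\xi)\le C\ve^{1/3}$ that is unproven. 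The real difficulty is to show that $v$ tracks $\sqrt{\mu^+}$ all the way down to distance $O(\ve^{2/3})$ from $\xi$, and the paper spends most of the proof of Lemma \ref{l2} on this: strict convexity of the solution in the region $D=\{y>\sqrt{4\max(\mu,0)}+(4\ve aF)^{1/3}\}$, a separation argument with the curve $\Gamma(x)=\sqrt{k\lambda(\xi-x)}+(4\ve aF)^{1/3}$ ruling out re-crossings, monotonicity of $x\mapsto 4\ve^2|u'|^2-u^4$, and an integration of $-u'/(u-(4\ve aF)^{1/3})^2\ge\frac1{2\ve}$ forcing any bad point $x_0$ to satisfy $\xi-x_0\le K'\ve^{2/3}$. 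Without this (or an equivalent supersolution argument of $\sqrt{\mu^+}$/Airy type), your $C^2$-bounds on $[-R,0]$ and the ``bounded at $\infty$'' property of the limit are unsupported, so the proof does not close. A minor further caveat: your parenthetical claim that $w^+(s)\sim\sqrt{-s/2}$ as $s\to-\infty$ does not follow from the fixed-$x$ convergence of Theorem \ref{th1n} (that convergence lives at a coarser scale); in the paper this asymptotics is proved for the limit solution itself via the ODE lemmas for \eqref{pain}, though it is not part of the statement you were asked to prove.
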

In order to be more precise about the limit of $w^\pm$ we state:
\begin{theorem}[A generalisation of the Hastings-McLeod result]\label{th3n}
The following statements hold.
\begin{itemize}
\item[(i)]
For any $\alpha \leq 0$\footnote{By changing $y$ by $-y$, we obtain the solutions of \eqref{pain} corresponding to $\alpha\geq 0$} the second Painlev\'e equation has a positive minimal solution $y$, which is strictly decreasing ($y' <0$) and such that 
\begin{itemize}
\item[(a)]
When $\alpha=0$
\begin{align}\label{asy0}
y(s)&\sim \mathop{Ai}(s), \qquad s\to \infty \nonumber \\
y(s)&\sim \sqrt{|s|/2}, \qquad s\to -\infty
\end{align}

Moreover, this is the only nonnegative minimal solution, bounded at $\infty$.
\item[(b)] When $\alpha<0$ 
\begin{align}\label{asy1}
y(s)&\sim \frac{|\alpha|}{s}, \qquad s\to \infty \nonumber\\
y(s)&\sim \sqrt{|s|/2}, \qquad s\to -\infty
\end{align}

\end{itemize}
\item[(ii)]
When $\alpha<0$ and $y$ is a minimal solution bounded at $\infty$, such that it vanishes at $s=\bar s$ then
\begin{align}\label{asy2}
y(s)&\sim \frac{|a|}{s}, \qquad s\to \infty \nonumber\\
y(s)&\sim -\sqrt{|s|/2}, \qquad s\to -\infty
\end{align}
\end{itemize}
\end{theorem}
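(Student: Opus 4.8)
The plan is to construct each minimal solution as a limit of the rescaled global minimizers furnished by Theorem \ref{th2n}, and then to pin down its qualitative behaviour and its precise decay and growth rates by a direct barrier analysis of the ODE \eqref{pain}. For existence in part (i), fix $\alpha\le 0$, take the physically relevant $\mu,f$, and choose the unique $a\ge 0$ for which $\tfrac{a f(\xi)}{\sqrt 2\,\mu_1}=\alpha$; since $f(\xi)/(\sqrt 2\,\mu_1)$ is a fixed negative constant, this quantity sweeps $(-\infty,0]$ as $a$ ranges over $[0,\infty)$, so every $\alpha\le 0$ is attained. Theorem \ref{th2n} then produces, as a $C^1_{\mathrm{loc}}$ limit of $w^+$ along a subsequence, a minimal solution $y$ of \eqref{pain} with this $\alpha$; nonnegativity of $y$ is inherited from the fact that near $+\xi$ the minimizer $v$ lies on the positive side of its unique zero $\bar x<\xi$ (Theorem \ref{th1n}(ii)), and will be upgraded to $y>0$ below. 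An equivalent, more self-contained route is to minimise $E_{\mathrm{P_{II}}}(\cdot,[-R,R])$ subject to boundary data equal to the appropriate algebraic root $y_0(s)$ of $2y^3+sy+\alpha=0$, obtain a minimiser $y_R$, and let $R\to\infty$; the limit is minimal by construction.

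The workhorse is comparison with the algebraic branch $y_0$, the positive root of $2y^3+sy+\alpha=0$, which satisfies $y_0\sim\sqrt{-s/2}$ as $s\to-\infty$ and $y_0\sim|\alpha|/s$ (resp. $y_0\to 0$ when $\alpha=0$) as $s\to+\infty$. A short computation shows $y_0''=O(|s|^{-3/2})$ is of lower order than $s y_0+2y_0^3$, so suitable dilations $y_0(s)(1\pm\delta)$ (or small additive shifts) are ordered sub- and supersolutions on half-lines, and the maximum principle traps $y$ between them, giving two-sided bounds. Positivity on all of $\R$ and strict monotonicity $y'<0$ then follow by differentiating \eqref{pain}: the function $p=y'$ solves the linear equation $p''=(s+6y^2)p+y$ with $y>0$, and interior critical points of $y$ are ruled out by a maximum-principle and phase-plane analysis together with the boundary behaviour at $\pm\infty$.

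The asymptotics split into the two ends. As $s\to-\infty$, refining the barriers $\sqrt{-s/2}\,(1\pm\delta)$ yields $y(s)\sim\sqrt{|s|/2}$ for the solutions in (i), while the same argument with $-\sqrt{-s/2}$ gives the negative-branch behaviour needed in (ii). As $s\to+\infty$ with $\alpha<0$, the sandwich $\tfrac{|\alpha|}{s}(1-\delta)\le y\le\tfrac{|\alpha|}{s}(1+\delta)$ built from $y_0$ gives $y\sim|\alpha|/s$, and this rate is forced for any minimal solution bounded at $\infty$, covering both (i)(b) and the right tail in (ii). The case $\alpha=0$ is more delicate: linearising $y''=sy+2y^3$ about $y=0$ produces the Airy equation, boundedness excludes the dominant solution $\mathrm{Bi}$, and unique continuation excludes the trivial one, so $y\sim k\,\mathrm{Ai}(s)$ for some $k>0$.

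The heart of the matter, and the step I expect to be the main obstacle, is to show that this connection constant is exactly $1$, i.e. that the minimal solution with prescribed growth $\sqrt{-s/2}$ at $-\infty$ is precisely the one with $y\sim\mathrm{Ai}(s)$ at $+\infty$. This is the connection problem solved by Hastings and McLeod \cite{MR555581}: values $k\in(0,1)$ give solutions oscillating about $0$ as $s\to-\infty$, whereas $k>1$ forces a pole at finite $s$, so the requirement $y\sim\sqrt{-s/2}$ singles out $k=1$. In the present framework I would derive this selection from minimality: minimal solutions are ordered, and the barrier pinning $y$ to the positive branch at $-\infty$ rules out both the oscillatory ($k<1$) and the blow-up ($k>1$) alternatives, leaving $k=1$. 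Uniqueness of the nonnegative bounded minimal solution in (i)(a) follows on the same lines, since two such solutions share both asymptotics and an ordering argument based on minimality forces them to coincide. Finally, in part (ii) the finite zero at $\bar s$ together with the forced positive decaying tail $|\alpha|/s$ at $+\infty$ pushes $y$ below $0$ for $s<\bar s$, and the negative barriers of the third paragraph pin the left tail to $-\sqrt{|s|/2}$.
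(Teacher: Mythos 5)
Your overall architecture coincides with the paper's: existence is obtained exactly as there, by tuning $a$ so that $af(\xi)/(\sqrt{2}\mu_1)=\alpha$ and passing to the limit of the rescaled minimizers from Theorem \ref{th2n}, the tails are controlled by comparison with the roots of the cubic $2y^3+sy+\alpha=0$, and the $\alpha=0$ connection is settled via the Hastings--McLeod classification with minimality excluding the alternatives. However, two of your steps would fail as stated. First, the claim that ``the maximum principle traps $y$ between $y_0(s)(1\pm\delta)$'' has no anchor: on a half-line a comparison argument needs an ordering at an endpoint, i.e.\ one must first prove that the solution \emph{enters} the trapping region, and moreover $\partial_y(2y^3+sy+\alpha)=6y^2+s$ is negative on a large range of $y$ as $s\to-\infty$, so comparison is only legitimate localized near the outer branches. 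This entry step is where the paper does real work: Lemma \ref{pp0} first produces sequences $s_n\to\pm\infty$ on which $y$ is ordered with respect to $\sigma_+$, via a contradiction argument based on the monotone quantity $\theta(s)=|y'(s)|^2-sy^2(s)-y^4(s)-2\alpha y(s)$ (whose derivative along solutions is $-y^2$), and only then locks $y$ in by convexity of $y-\sigma_+$ between touching points, using the convexity/concavity of $\sigma_\pm$ established in Lemma \ref{roots}.

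The more serious gap is the left tail in part (ii). Barriers cannot yield $y(s)\sim-\sqrt{|s|/2}$: by Lemma \ref{pp0}(ii) any solution bounded at $+\infty$ only satisfies $\sigma_-\le y\le\sigma_+$ near $-\infty$, and there exist genuine solutions that remain bounded there, oscillating about the middle root $\sigma_0(s)\to 0^-$; at $\sigma_0$ one has $6\sigma_0^2+s<0$ for $s\ll 0$, so the linearization is oscillatory and no sub/supersolution pair excludes them. What rules them out is minimality, used twice in the paper's Lemma \ref{pp0b}(ii): the second-variation inequality \eqref{secondvar} is violated for bounded $y$ by translated test functions $\phi_0(\cdot+h)$, $h\to\infty$, so a minimal $y$ cannot stay bounded at $-\infty$; and an energy comparison of $y$ with the truncation $\min(y,l)$ forces $\lim_{s\to-\infty}y(s)=-\infty$, after which convexity ($\sigma_-\le y\le\sigma_0$) gives the rate. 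Relatedly, ``the finite zero pushes $y$ below $0$ for $s<\bar s$'' is not an ODE fact --- zeros with $y'(\bar s)\neq 0$ are transversal crossings, and $y''(\bar s)=\alpha<0$ only bites when $y'(\bar s)=0$ --- the paper excludes further zeros by comparing $E_{\mathrm{P_{II}}}(y)$ with $E_{\mathrm{P_{II}}}(|y|)$ on the interval between two putative zeros. Finally, your assertion that ``minimal solutions are ordered'' (invoked for the $k=1$ selection at $\alpha=0$) is unproven; the paper instead derives the decay $y(s)\le y(0)e^{-\frac{2}{3}s^{3/2}}$ from $\theta$ and then cites the Hastings--McLeod classification, with minimality via \eqref{secondvar} discarding the solutions tending to $0$ at $-\infty$. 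Your barrier scheme does suffice for the left tail in (i)(b), but only because the sign condition $y\ge 0$ already confines $y$ to the positive branch --- precisely the confinement that is unavailable, and must be replaced by minimality, in part (ii).
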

From this we have as a corollary:
\begin{corollary}\label{cor teo 3}
If $v$ is the global minimizer of $E$ for $a>0$ and  if $v\geq 0$ on $[0,\infty)$ (resp. $v\leq 0$ on $(-\infty,0]$), then $w^+$ (resp. $w^-$) converges to the solution $y$, described in Theorem \ref{th3n} (i).
\end{corollary}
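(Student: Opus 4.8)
The plan is to combine Theorem \ref{th2n}, which already hands us subsequential convergence of $w^\pm$ to a minimal solution of \eqref{pain}, with the sign information carried by the hypothesis $v\geq 0$ on $[0,\infty)$ (resp.\ $v\leq 0$ on $(-\infty,0]$), and finally with the classification in Theorem \ref{th3n} to pin down the limit. I treat $w^+$; the case of $w^-$ is identical after the odd reflection $v(x)\mapsto -v(-x)$. By Theorem \ref{th2n}, along a subsequence $\ve_k\to 0$ the functions $w^+$ converge in $C^1_{\mathrm{loc}}(\R)$ to a minimal solution $y$ of \eqref{pain}, bounded at $\infty$, with $\alpha=\frac{a f(\xi)}{\sqrt{2}\,\mu_1}<0$. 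The whole task is to show that $y$ is the positive solution furnished by Theorem \ref{th3n}(i).

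First I would transfer the sign. Fix $M>0$ and $s\in[-M,M]$. Since $\mu_1<0$ the prefactor $2^{-1/2}(-\mu_1\ve)^{-1/3}$ is strictly positive, so $w^+(s)$ has the same sign as $v\big(\xi+\ve^{2/3}s/(-\mu_1)^{1/3}\big)$. As $\ve\to 0$ the argument $\xi+\ve^{2/3}s/(-\mu_1)^{1/3}$ lies in a shrinking neighbourhood of $\xi>0$, hence is positive once $\ve$ is small enough, uniformly in $s\in[-M,M]$. The hypothesis $v\geq 0$ on $[0,\infty)$ then gives $w^+(s)\geq 0$ for all $s\in[-M,M]$ and all small $\ve$, and passing to the $C^1_{\mathrm{loc}}$ limit yields $y\geq 0$ on $[-M,M]$. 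Since $M$ was arbitrary, $y\geq 0$ on $\R$.

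Next I would upgrade nonnegativity to strict positivity by a one-point argument. Suppose $y(\bar s)=0$ for some $\bar s\in\R$. Because $y\geq 0$, the point $\bar s$ is a global minimum, so $y'(\bar s)=0$ and $y''(\bar s)\geq 0$. On the other hand, evaluating the equation $y''=s y+2y^3+\alpha$ at $\bar s$ gives $y''(\bar s)=\alpha<0$, a contradiction. (Equivalently one may invoke Theorem \ref{th3n}(ii): a minimal solution bounded at $\infty$ that vanishes somewhere satisfies $y(s)\sim-\sqrt{|s|/2}$ as $s\to-\infty$, which is incompatible with $y\geq 0$.) Hence $y>0$ on all of $\R$.

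Thus $y$ is a positive minimal solution of \eqref{pain}, bounded at $\infty$, with $\alpha<0$; by Theorem \ref{th3n}(i)(b) it is strictly decreasing, with $y(s)\sim|\alpha|/s$ as $s\to\infty$ and $y(s)\sim\sqrt{|s|/2}$ as $s\to-\infty$, i.e.\ it is the solution described there. The last point, and the main obstacle, is to promote subsequential convergence to convergence of the full family while guaranteeing that the limit is this specific solution: this requires uniqueness of the positive minimal solution bounded at $\infty$ for each fixed $\alpha<0$ (uniqueness is stated in Theorem \ref{th3n}(i)(a) only for $\alpha=0$, so one must extract the analogous statement for $\alpha<0$ from the construction in the proof of Theorem \ref{th3n}). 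Granting that uniqueness, every subsequence of $\{w^+\}_{\ve}$ admits a further subsequence converging in $C^1_{\mathrm{loc}}(\R)$ to the same $y$, whence the entire family converges to $y$, completing the proof.
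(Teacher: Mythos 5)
Your core argument coincides with the paper's own. The sign transfer to the limit is exactly the paper's remark (in the proof of Theorems \ref{th2n} and \ref{th3n}) that by taking global minimizers $v$ nonnegative on $[0,\infty)$ one obtains $\tilde V\geq 0$ and $y\geq 0$ in the limit, and your one-point argument excluding a zero ($y(\bar s)=0$ would force $y'(\bar s)=0$, $y''(\bar s)\geq 0$, while \eqref{pain} gives $y''(\bar s)=\alpha<0$) is verbatim the first step of the proof of Lemma \ref{pp0b}(i). One attribution should be corrected: the properties of the limit must be drawn not from the statement of Theorem \ref{th3n}(i)(b) — which only asserts the \emph{existence} of one solution with those properties, so invoking it for an arbitrary subsequential limit is circular — but from Lemma \ref{pp0}(iv) and Lemma \ref{pp0b}(i), which apply to \emph{any} solution bounded at $\infty$ that is nonnegative (minimality is not even needed there) and yield $y>0$, $y'<0$, and the asymptotics \eqref{asy1}. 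This is precisely how the paper identifies the limit.

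Where you diverge is the final paragraph, and there you have correctly spotted a real issue but you cannot discharge it the way you propose. Full-family convergence of $w^+$ would indeed require uniqueness of the positive minimal solution bounded at $\infty$ for the relevant parameter, and since $a>0$ forces $\alpha=\frac{af(\xi)}{\sqrt{2}\mu_1}<0$, the uniqueness asserted in Theorem \ref{th3n}(i)(a) for $\alpha=0$ is irrelevant here. The paper does not prove uniqueness for $\alpha<0$: the $\alpha=0$ uniqueness is obtained in Lemma \ref{pp0bb} by invoking the Hastings--McLeod classification of solutions of \eqref{PII0} decaying at $+\infty$, and no analogue of that classification is invoked (or available in the paper) when $\alpha<0$; there is nothing to ``extract from the construction in the proof of Theorem \ref{th3n}.'' The paper instead reads the corollary in the same subsequential sense as Theorem \ref{th2n}: every subsequential limit is a positive, strictly decreasing minimal solution with the asymptotics \eqref{asy1}, i.e.\ a solution as described in Theorem \ref{th3n}(i)(b). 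Under that reading your first three paragraphs already constitute a complete proof, matching the paper's; under your stronger reading, the proof remains conditional on a uniqueness statement that is genuinely open within the paper.
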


Part (i) (a) of Theorem \ref{th3n} generalizes the result of Hastings-McLeod in the sense that we characterise their solution as minimal. This property holds also for solutions described in part (i) (b) of this theorem and it explains why   they are energetically privileged in the boundary layer behaviour seen in various physical systems. This should be compared with the well known minimality property of $H(x)=\tanh(x/\sqrt{2})$ for the Allen-Cahn equation.   
Existence of the minimal solution described in Theorem  \ref{th3n} (ii) above is conjectured on the basis of  numerical simulations of the global minimizers of $E$. A rigorous proof is an open problem.

In the rest of this paper we give proofs of the results stated above. 
\section{Proof of Theorem \ref{th1n}}

\begin{step}\label{l1n}(Existence of a global minimizer)
\end{step}
\begin{lemma}\label{lem exist min}
There exists $v \in H^1(\R)$ such that $E(v)=\min_{H^1(\R)} E$. As a consequence, $v$ is a classical solution of \eqref{ode}.
\end{lemma}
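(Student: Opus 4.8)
The plan is to establish existence of a minimizer by the direct method in the calculus of variations, and then to upgrade the minimizer to a classical solution by elliptic regularity. The main subtlety is that the energy functional $E$ is \emph{not} coercive in the naive sense because of the sign-indefinite term $-\frac{1}{2\epsilon}\mu u^2$ and the linear term $-af u$; moreover the domain $\R$ is unbounded, so I must rule out loss of mass at infinity.

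First I would show that $E$ is bounded below on $H^1(\R)$ and that minimizing sequences are bounded in $H^1(\R)$. The key point is that the quartic term $\frac{1}{4\epsilon}|u|^4$ dominates the indefinite quadratic term: writing $-\frac{1}{2\epsilon}\mu u^2+\frac{1}{4\epsilon}|u|^4\geq \frac{1}{8\epsilon}|u|^4-C_\epsilon$ pointwise (completing the square, using $\mu\in L^\infty$), one controls the $L^4$ norm. The linear term $-af u$ is handled by Young's inequality, $|af u|\leq \delta |u|^4 + C_\delta |f|^{4/3}$, using $f\in L^1\cap L^\infty\subset L^{4/3}$. Combined with the gradient term $\frac{\epsilon}{2}|u'|^2$, this gives a lower bound of the form $E(u)\geq c\|u'\|_{L^2}^2 + c\|u\|_{L^4}^4 - C$, whence minimizing sequences are bounded in $\dot H^1\cap L^4$, and the control of $\|u\|_{L^2}$ on bounded sets together with the $L^4$ bound gives boundedness in $H^1(\R)$.

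\textbf{The main obstacle} is the lack of compactness of the embedding $H^1(\R)\hookrightarrow L^p(\R)$ on the full line, so weak convergence of a minimizing sequence $u_n\rightharpoonup v$ does not immediately pass the nonlinear terms. The plan is to exploit that the interaction terms are localized: since $\mu(x)\to -\chi<0$ and $f(x)\to 0$ as $|x|\to\infty$, the ``dangerous'' part of the energy lives on a compact set, while far out the integrand is nonnegative (for $|x|>\xi$ one has $\mu<0$, so $-\frac{1}{2\epsilon}\mu u^2\geq 0$). Concretely I would pass to the weak limit, use weak lower semicontinuity of the convex terms $\int |u'|^2$ and $\int |u|^4$, and for the indefinite and linear terms use that on any ball $B_R$ the embedding $H^1(B_R)\hookrightarrow L^2(B_R)$ is compact, so these integrals converge over $B_R$; the tails are then controlled uniformly by the nonnegativity of $-\frac12\mu u^2$ for $|x|>\xi$ together with $f\in L^1$. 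This yields $E(v)\leq \liminf E(u_n)=\inf E$, so $v$ is a minimizer.

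Finally, to obtain that $v$ is a classical solution of \eqref{ode}, I would note that $v$ satisfies the Euler--Lagrange equation $\epsilon^2 v''+\mu v - v^3+\epsilon a f=0$ weakly. Since $v\in H^1(\R)\hookrightarrow C^{0,1/2}(\R)$ is bounded and continuous, and $\mu\in C^1$, $f\in C$, the right-hand side $-\mu v+v^3-\epsilon a f$ is continuous; hence $v''$ is continuous by the weak formulation, giving $v\in C^2(\R)$ and \eqref{ode} holding pointwise. A standard bootstrap then confirms the classical regularity claimed.
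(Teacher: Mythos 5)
Your route is essentially the paper's: the direct method, with the quartic term split so that one half dominates the sign-indefinite quadratic term and the other half dominates the linear term via Young's inequality against $|f|^{4/3}$ (using $f\in L^1\cap L^\infty\subset L^{4/3}$), then lower semicontinuity via compactness on bounded sets plus nonnegativity of the tails, and a standard bootstrap for $C^2$ regularity. The lsc step and the regularity step are fine and match the paper (which uses Fatou on $\{\mu\le 0\}$ and weak $L^2$ convergence on the bounded set $\{\mu>0\}$ instead of your balls $B_R$ --- a cosmetic difference).

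However, your coercivity step has a genuine gap, in two places. First, the pointwise bound $-\frac{1}{2\epsilon}\mu u^2+\frac{1}{4\epsilon}|u|^4\geq \frac{1}{8\epsilon}|u|^4-C_\epsilon$ with $C_\epsilon$ independent of $x$ is useless on the unbounded domain, since $\int_\R C_\epsilon\,\dd x=\infty$; the constant must be localized, which is possible because the left side can be negative only where $u^2<4\mu$, hence only on a bounded set $I_\eta=\{\mu+\eta>0\}$ (the paper's bound is $-\frac{2}{\epsilon}\|\mu\|_{L^\infty}^2\chi_{I_\eta}$). Second, and more seriously, even the corrected bound $E(u)\geq c\|u'\|_{L^2}^2+c\|u\|_{L^4}^4-C$ does \emph{not} imply that minimizing sequences are bounded in $H^1(\R)$: on the line, a bound in $\dot H^1\cap L^4$ does not control the $L^2$ norm. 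For instance $u_n(x)=n^{-1/4}\phi(x/n)$ with $\phi\in C_0^\infty$ has $\|u_n\|_{L^4}$ bounded and $\|u_n'\|_{L^2}\to 0$, yet $\|u_n\|_{L^2}^2=n^{1/2}\|\phi\|_{L^2}^2\to\infty$. Your lower bound has discarded the only source of $L^2$ coercivity, namely that $\mu\leq-\eta<0$ outside $I_\eta$ (since $\mu$ is even, strictly decreasing on $(0,\infty)$, with a unique zero $\xi$) --- a fact you invoke only qualitatively for the tails. The paper uses it quantitatively: adding $\frac{\eta}{2\epsilon}\int u_n^2$ to both sides and absorbing $\frac{1}{2\epsilon}(\mu+\eta)u_n^2-\frac{1}{4\epsilon}u_n^4+afu_n$ by the same pointwise estimates yields a true $H^1(\R)$ bound on the minimizing sequence. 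With that repair (or, alternatively, working in $H^1_{\mathrm{loc}}$ and deducing $v\in L^2$ a posteriori from $E(v)<\infty$ and $-\mu\geq\eta$ at infinity), your argument goes through as in the paper.
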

\begin{proof}
We first show that $\inf\{\,E(u):\ u \in H^1_{\mathrm{loc}}(\R) \, \}>-\infty$. 
To see this, we regroup the last three terms in the integral of $E(u)$. 
Setting $I_\eta:=\{ x \in \R: \mu(x)+\eta>0\}$, for $\eta>0$ sufficiently small such that $I_\eta$ is bounded, we have
$$ -\frac{1}{2\epsilon}\mu(x)u^2+\frac{1}{8\epsilon}|u|^4< 0 \Longleftrightarrow u^2< 4\mu \Longrightarrow x\in I_{\eta},$$
thus
$$ -\frac{1}{2\epsilon}\mu(x)u^2+\frac{1}{8\epsilon}|u|^4\geq -\frac{2}{\epsilon}\left\| \mu\right\|^2_{L^\infty} \chi_\eta,$$
where $\chi_\eta$ is the characteristic function of $I_\eta$. On the other hand,
$$\frac{1}{8\epsilon}|u|^4-a f(x)u<0\Longrightarrow |u|^3\leq 8 a\epsilon|f|\Longrightarrow|fu|\leq  (8a\epsilon)^{1/3}|f|^{4/3},$$
thus
$$\frac{1}{8\epsilon}|u|^4-a f(x)u\geq-a (8a\epsilon)^{1/3}|f|^{4/3}.$$
Next, we notice that $E(u)\in \R$ for every $u \in H^1(\R)$, thanks to the imbedding $H^1(\R)\subset L^p(\R)$, for $2\leq p \leq\infty$. Now, let $m:=\inf_{H^1}E>-\infty$, and let $u_n$ be a sequence such that $E(u_n) \to m$. Repeating the previous computation, we can bound
\begin{align}
\int_\R\frac{\epsilon}{2}| u'_n|^2+\frac{\eta}{2\epsilon}u_n^2 &=E(u_n)+\int_{\R}\frac{1}{2\epsilon}(\mu(x)+\eta)u^2_n-\frac{1}{4\epsilon}|u_n|^4+a f(x)u_n\nonumber \\
&\leq E(u_n)+\frac{2}{\epsilon}(\left\| \mu\right\|_{L^\infty}+\eta)^2 |I_\eta|+a (8a\epsilon)^{1/3}\int_{\R}|f|^{4/3}.\nonumber
\end{align}
From this expression it follows that $\left\|u_n \right\|_{H^{1}(\R)}$ is bounded. As a consequence, for a subsequence still called $u_n$, $u_n\rightharpoonup v$ weakly in $H^1$, and thanks to a diagonal argument we also have $u_n\to v$ in $L^2_{\mathrm{loc}}$, and almost everywhere in $\R$. Finally, by lower semicontinuity $$\int_\R |v'|^2\leq \liminf_{n \to \infty} \int_\R  |u'_n|^2,$$ and by Fatou's Lemma we have $$\int_\R |v|^4\leq \liminf_{n \to \infty} \int_\R  |u_n|^4, \text{ and }\int_{\mu\leq 0}-\frac{1}{2\epsilon}\mu v^2 \leq \liminf_{n \to \infty} \int_{\mu\leq 0}-\frac{1}{2\epsilon}\mu u_n^2.$$ To conclude, it is clear that $$\int_{\mu> 0}-\frac{1}{2\epsilon}\mu v^2= \lim_{n \to \infty} \int_{\mu > 0}-\frac{1}{2\epsilon}\mu u_n^2,$$ thus $m\leq E(v)\leq\liminf_{n \to \infty}  E(u_n)=m$.
\end{proof}
\begin{step}(Proof of (i))
\end{step}\begin{proof}
When $a=0$, we have $E(|v|)=E(v)$, in particular $|v|$ is also a minimizer and a smooth solution of the Euler-Lagrange equation. Now suppose that $v(x_0)=0$ for some $x_0$. Then, $|v|$ has a minimum at $x_0$, and $|v|(x_0)=|v|'(x_0)=0$. By the uniqueness result for O.D.E., it follows that $v\equiv 0$. However, this situation does not occur for $\epsilon \ll 1$. Indeed, by choosing an appropriate test function $\phi$ with $\supp\phi\subset (-\xi,\xi)$, one can see that $E(\phi)<0$ for $\epsilon\ll 1$. Thus $v$ is positive up to change of $v$ by $-v$. Finally, we notice that $E(v,[0,\infty))=E(v,(-\infty,0])$, since otherwise we can construct a function in $H^1$ with smaller energy than $v$. As a consequence, $\tilde v(x) = v(|x|)$ is also a minimizer, and since $\tilde v= v$ on $[0,\infty)$, it follows by the uniqueness result for O.D.E. that $\tilde v\equiv v$.
\end{proof}

\begin{step}(Uniform bounds)
\begin{lemma}\label{s3}
For $\epsilon$ and $a$ belonging to a bounded interval, let $u_{\epsilon,a}$ be a solution of \eqref{ode} converging to $0$ at $\pm\infty$. Then, the solutions $u_{\epsilon,a}$ are uniformly bounded.
\end{lemma}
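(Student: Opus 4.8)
The plan is to prove the uniform bound purely by a maximum-principle argument applied to \eqref{ode}, using only the boundedness of the coefficients $\mu$ and $f$ together with the decay of $u_{\epsilon,a}$ at infinity; no energy estimate or sophisticated machinery is needed. Write $u=u_{\epsilon,a}$ and set $M=\|\mu\|_{L^\infty}$, $F=\|f\|_{L^\infty}$, and let $\epsilon_0,a_0$ bound the prescribed ranges of $\epsilon$ and $a$, so that $\epsilon a\le \epsilon_0 a_0$ throughout.

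First I would record that, since $u$ is continuous and $u(x)\to 0$ as $x\to\pm\infty$, the function $u$ is bounded on $\R$, and whenever $\sup_\R u>0$ this supremum is in fact attained at some finite point $x_0$: outside a large compact set $|u|$ is below $\tfrac12\sup_\R u$, so any maximizing sequence remains in a compact interval and the supremum is realized there. At such an interior maximum one has $u'(x_0)=0$ and $u''(x_0)\le 0$. Evaluating \eqref{ode} at $x_0$ and discarding the sign-favorable term $\epsilon^2u''(x_0)\le 0$, I obtain, with $S:=u(x_0)=\sup_\R u>0$,
$$
S^3=\epsilon^2u''(x_0)+\mu(x_0)S+\epsilon a f(x_0)\le MS+\epsilon_0 a_0 F.
$$
Thus $S$ satisfies the cubic inequality $S^3-MS-\epsilon_0 a_0 F\le 0$, which forces $S\le K$, where $K$ is the unique positive root of $t^3-Mt-\epsilon_0a_0F=0$ (the polynomial is negative at $t=0$, eventually positive, and has a single positive zero, being negative precisely below it).

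An identical argument at a negative minimum $x_1$, where $u''(x_1)\ge 0$, gives $\inf_\R u\ge -K$. Combining the two one-sided estimates yields $\|u_{\epsilon,a}\|_{L^\infty}\le K$, and since $K$ depends only on $M$, $F$, $\epsilon_0$, and $a_0$ — and not on the particular solution — the bound is uniform over the admissible ranges of $\epsilon$ and $a$, which is exactly the claim.

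The one point that looks delicate is the uniformity as $\epsilon\to 0$: because \eqref{ode} is singularly perturbed, one might fear that the coefficient $\epsilon^2$ in front of $u''$ degrades the estimate in the limit. I expect this to be the main conceptual obstacle, and it is resolved by the observation above that at an interior extremum the second-derivative term carries a favorable sign and can simply be dropped; the surviving terms $\mu u$ and $\epsilon a f$ have coefficients bounded independently of how small $\epsilon$ is. The only other place needing care is the genuine attainment of the extrema rather than their mere approach at infinity, which is supplied directly by the decay hypothesis as indicated above.
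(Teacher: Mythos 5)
Your proof is correct and is essentially the paper's own argument: both note that the decay of $u_{\epsilon,a}$ at $\pm\infty$ forces any positive supremum to be attained at an interior point $x_0$, where $\epsilon^2 u''(x_0)\le 0$ can be discarded from \eqref{ode} to yield the cubic inequality $u(x_0)^3\le \mu(x_0)u(x_0)+\epsilon a f(x_0)$, whose positive solutions are bounded uniformly in $x$, $\epsilon$, and $a$. Your explicit remark that the singular factor $\epsilon^2$ is harmless because the second-derivative term enters with a favorable sign is exactly the mechanism implicit in the paper's proof.
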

\end{step}

\begin{proof}
Since $|f|,\mu,\epsilon,a$ are bounded, the roots of the cubic equation $$-u^3+\mu(x)u+\epsilon a f(x)=0$$ belong to a bounded interval, for all values of $x,\epsilon,a$. If $u:=u_{\epsilon,a}$ takes positive values, then it attains its maximum $0\leq \max_{\R}u=u(x_0)$, at a point $x_0\in\R$. Since $$0\geq\epsilon^2 u''(x_0)=u^3(x_0)-\mu(x_0)u(x_0)-\epsilon a f(x_0),$$ we can see that $u(x_0)$ is uniformly bounded above. In the same way, we prove the uniform lower bound.
\end{proof}

\begin{step}(Proof of (ii))
\end{step}
\underline{Claim 1}: When $a>0$, the global minimizer $v$ has at most one zero, denoted by $\bar x$. Furthermore, $v(x)>0$, $\forall x >\bar x$, and $v(x)<0$, $\forall x<\bar x$. 
\begin{proof}[Proof of Claim 1]
Let $\bar x\geq 0$ be a zero of $v$. If $v(x_1)<0$ for some $x_1>\bar x$, then $E(v ,[\bar x,\infty)>E(|v|,[\bar x,\infty))$, which is a contradiction. Now, if $v(x_2)=0$ for some $x_2>\bar x$, then according to what precedes $v$ has a minimum at $x_2$. It follows that $v(x_2)=v'(x_2)=0$, and $v''(x_2) \geq 0$, which is impossible, since by \eqref{ode} we have: $\epsilon v''(x_2)=-af(x_2)<0$. Thus we have proved that $v(\bar x)=0$, with $\bar x \geq 0$, implies that $v(x)>0$, $\forall x>\bar x$. Thanks to the previous argument, we also see that $v$ cannot have another zero in the interval $[0,\infty)$. In the same way, one can show that $v$ has at most one zero $\bar y$ in the interval $(-\infty,0]$. Furthermore, $v(\bar y)=0$, with $\bar y \leq 0$, implies that $v(x)<0$, $\forall x<\bar y$. To complete the proof, it remains to exclude the case where $v(\bar y)=v(\bar x)=0$, with $\bar y<0<\bar x$. In this case, we have either $v>0$ or $v<0$ in the interval $(\bar y, \bar x)$. 
Assuming the former we see that $v$ has a minimum at $\bar x$, which is impossible by the argument at the beginning of the proof.
The second statement of Claim 1 follows by a similar argument. 
\end{proof}
\underline{Claim 2}: If $\epsilon>0$ and $a>0$ remain in a bounded interval, there exists a constant $\delta>0$ such that $v$ has a unique zero $\bar x$, when $\frac{\epsilon}{a}<\delta$. In addition, $|\bar x|\leq \xi+\mathcal O(\sqrt{\epsilon/a})$. 
\begin{proof}[Proof of Claim 2]
Suppose that $x_0<-\xi$ is such that $v(x_0)> 0$. We are first going to show that $v'(x_0)>0$. Indeed, suppose by contradiction that $v'(x_0)\leq 0$. Setting $$m:=\inf\{ x<x_0: \ v \geq 0\text{ on }[x,x_0]\},$$
one can see by \eqref{ode}, that $v$ is convex on the interval $(m,x_0]$, and thus $v \geq  v(x_0)$ on $(m,x_0]$. It follows that $m=-\infty$, which is a contradiction since $\lim_{-\infty}v=0$. This proves our claim. Now, let $M>0$ be the constant (cf. Lemma \ref{s3}), such that $|v_{\epsilon,a}|\leq M$ when $\epsilon$ and $a$ remain bounded, and let $m'=\min_{[-\xi-1,-\xi]}(-f)$. According to Claim 1, we have $v>0$ on the interval $[x_0,-\xi]$, thus in view of \eqref{ode} we have $v''\geq -\frac{a}{\epsilon}f$ on $[x_0,-\xi]$. In particular, for any  $x_0 \in [-\xi-1,-\xi]$ such that $v(x_0)> 0$, we obtain 
\begin{equation}\label{equzero1}
M\geq v(-\xi)-v(x_0)\geq m' \frac{a}{\epsilon} \frac{(\xi+x_0)^2}{2}.
\end{equation}
From this inequality, we see by taking $\delta=\frac{m'}{2M}$, that if $\frac{\epsilon}{a}<\delta$, then we cannot have $x_0=-\xi-1$, or in other words  $v(-\xi-1)\leq 0$. Repeating the same analysis for $x_0>\xi$, we also deduce that if  $\frac{\epsilon}{a}<\delta$ then  $v(\xi+1)\geq 0$. Thus, the existence of a zero of $v$ in the interval $[-\xi-1,\xi+1]$ is ensured when $\frac{\epsilon}{a}<\delta$. This zero denoted by $\bar x$ which is unique by Claim 1, satisfies in view of \eqref{equzero1}: $|\bar x|\leq \xi+\mathcal O(\sqrt{\epsilon/a})$. 
\end{proof}

\begin{step}(Upper bound of the renormalized energy)

\end{step}

The minimum of the energy defined in \eqref{funct 0} is nonpositive and tends to $-\infty$ as $\epsilon \to 0$. Since we are interested in the behavior of the minimizers as $\epsilon \to 0$, it is useful to define a renormalized energy, which is obtained by adding to \eqref{funct 0} a suitable term so that the result is bounded from below and above by an $\ve$ independent constant. We define the renormalized energy as 
\begin{equation}\label{renorm}
\mathcal{E}(u):=E(u)+\int_{|x|<\xi}\frac{\mu^2}{4\epsilon}=\int_{\R}\frac{\epsilon}{2}|u'|^2+\int_{|x|<\xi}
\frac{(u^2-\mu)^2}{4\epsilon}+\int_{|x|>\xi}\frac{u^2(u^2-2\mu)}{4\epsilon}-\int_\R a f u ,
\end{equation}  
and claim  the bound 
\begin{equation}\label{qua}
\limsup_{\epsilon\to 0}\mathcal{E}(v_{\epsilon,a})\leq \min\Big(0,\frac{2\sqrt{2}}{3}(\mu(0))^{3/2}-\int_{-\xi}^\xi a |f|\sqrt{\mu}\Big).
\end{equation}
\begin{proof}[Proof of \eqref{qua}]
Let us consider the $C^1$ piecewise function:
\begin{equation}
\phi(x)=
\begin{cases}
\sqrt{\mu(x)} &\text{for } |x|\leq \xi-\epsilon, \\
k_\epsilon \epsilon^{1/2}e^{-\frac{|x|-\xi}{\epsilon}}  &\text{for } |x|\geq \xi-\epsilon,
\end{cases}\nonumber
\end{equation} 
with $k_\ve$ defined by $k_\epsilon \epsilon^{1/2}e=\sqrt{\mu(\xi- \epsilon)}\Longrightarrow k_\epsilon=\mathcal O(1)$.
Since $\phi \in H^1(\R)$, it is clear that $\mathcal E(v)\leq \mathcal E(\phi)$. We check that $\mathcal E(\phi)=\mathcal O(\epsilon \ln(\epsilon))$, since it is the sum of the following integrals:
$$\int_{|x|\leq\xi-\epsilon}\frac{\epsilon}{2}\frac{ |\mu'|^2}{4\mu}=\mathcal O(\epsilon | \ln \epsilon |), \ \int_{\xi-\epsilon\leq|x|\leq \xi}\frac{\mu^2}{4\epsilon}=\mathcal O(\epsilon^2), $$
$$E(\phi, (-\infty,-\xi+\epsilon])+ E(\phi, [\xi-\epsilon,\infty))=\mathcal O(\epsilon).$$ 
Next, we repeat the previous computation by considering another $C^1$ piecewise function:
\begin{equation}
\psi(x)=
\begin{cases}
-k_\epsilon \epsilon^{1/2}e^{\frac{x+\xi}{\epsilon}}  &\text{for } x\leq -\xi+\epsilon,\\
-\sqrt{\mu(x)} &\text{for } -\xi+\epsilon\leq x\leq -\zeta_\epsilon \epsilon, \\
l_\epsilon \tanh\Big(\frac{x}{\epsilon}\sqrt{\frac{\mu(0)}{2}}\Big) &\text{for } |x|\leq \zeta_\epsilon \epsilon, \\
\sqrt{\mu(x)} &\text{for } \zeta_\epsilon \epsilon \leq x\leq \xi-\epsilon, \\
k_\epsilon \epsilon^{1/2}e^{-\frac{x-\xi}{\epsilon}}  &\text{for } x\geq \xi-\epsilon,
\end{cases}\nonumber
\end{equation} 
with $$\zeta_\epsilon=- \ln \epsilon,\ k_\epsilon \text{ as above},$$ 
$$l_\epsilon \tanh\Big(\zeta_\epsilon\sqrt{\frac{\mu(0)}{2}}\Big)=\sqrt{\mu(\zeta_\epsilon\epsilon)}\Longrightarrow \lim_{\epsilon \to 0}l_\epsilon=\sqrt{\mu(0)}, \ \frac{l_\epsilon^2}{\mu(\zeta)}=1+O(\epsilon^\gamma), \text{ for some $0<\gamma<1$}.$$ 
Since $\psi \in H^1(\R)$, we have $\mathcal E(v)\leq \mathcal E(\psi)$. We can check that 
\begin{equation}\label{quabb}
\lim_{\epsilon \to 0}\mathcal E(\psi)\to\frac{2\sqrt{2}}{3}(\mu(0))^{3/2}-\int_{-\xi}^\xi a |f|\sqrt{\mu}.
\end{equation}
Indeed, setting $\tilde \psi(s)=\sqrt{\mu(0)}\tanh\Big(s\sqrt{\frac{\mu(0)}{2}}\Big)$, $\mathcal E(\psi)$ is the sum of the following integrals:
$$E(\psi, (-\infty,-\xi+\epsilon])+ E(\psi, [\xi-\epsilon,\infty))=\mathcal O(\epsilon),$$ 
$$\int_{\zeta_\epsilon\epsilon<|x|\leq\xi-\epsilon}\frac{\epsilon}{2}\frac{ |\mu'|^2}{4\mu}=\mathcal O(\epsilon | \ln \epsilon |), \ \int_{\xi-\epsilon\leq|x|\leq \xi}\frac{\mu^2}{4\epsilon}=\mathcal O(\epsilon^2), $$
$$-\int_{\zeta_\epsilon \epsilon<|x|<\xi-\epsilon}a |f|\sqrt{\mu}\to-\int_{|x|<\xi}a |f|\sqrt{\mu},$$
$$\int_{|x|\leq \zeta_\epsilon \epsilon}\frac{\mu^2}{4\epsilon}=\int_{|x|\leq \zeta_\epsilon \epsilon}\frac{\mu^2(0)}{4\epsilon}+\mathcal O(\zeta_\epsilon^2\epsilon), $$
$$\int_{|x|<\zeta_\epsilon \epsilon}\frac{\epsilon}{2}|\psi'|^2=\frac{l_\epsilon^2}{\mu(\zeta)}\int_{|s|<\zeta_\epsilon}\frac{1}{2}|\tilde \psi'|^2 =\int_{|s|<\zeta_\epsilon}\frac{1}{2}|\tilde \psi'|^2+\mathcal O(\zeta_\epsilon \epsilon^\gamma),$$   
$$-\int_{|x|<\zeta_\epsilon \epsilon}\frac{\mu}{2\epsilon}\psi^2=-\int_{|x|<\zeta_\epsilon \epsilon}\frac{\mu(0)}{2\epsilon}\psi^2+\mathcal O(\zeta_\epsilon^2\epsilon)=-\frac{l_\epsilon^2}{\mu(0)}\int_{|s|<\zeta_\epsilon }\frac{\mu(0)}{2}\tilde \psi^2+
\mathcal O(\zeta_\epsilon^2\epsilon)=-\int_{|s|<\zeta_\epsilon }\frac{\mu(0)}{2}\tilde \psi^2+\mathcal O(\zeta_\epsilon \epsilon^\gamma),$$
$$\int_{|x|<\zeta_\epsilon \epsilon}\frac{1}{4\epsilon}|\psi|^4=\frac{l_\epsilon^4}{(\mu(\zeta))^2}\int_{|s|<\zeta_\epsilon }\frac{1}{4}|\tilde \psi|^4=\int_{|s|<\zeta_\epsilon }\frac{1}{4}|\tilde \psi|^4+\mathcal O(\zeta_\epsilon \epsilon^\gamma),$$
$$-\int_{|x|<\zeta_\epsilon \epsilon}a f \psi=\mathcal O(\zeta_\epsilon \epsilon).$$
Gathering the previous equations, \eqref{quabb} follows immediately.
\end{proof}

\begin{step}\label{s6}
Let $a>0$, and let $v_{\epsilon,a}$ be a global minimizer. Up to the odd symmetry we may assume that $v$ is nonnegative on $[0,\infty)$. Setting
$$a_*:=\inf_{x \in (-\xi,0]}\frac{\sqrt{2}(\mu(x))^{3/2}}{3\int_{-\xi}^x |f|\sqrt{\mu} }\in (0,\infty),$$ 
and
$$a^*:=\begin{cases}\sup_{x\in[-\xi,0)} \frac{\sqrt{2}\big((\mu(0))^{3/2}-(\mu(x))^{3/2}\big)}{3\int_x^0|f|\sqrt{\mu}},\quad \mbox{if}\ \sup\ \mbox{exists},\\
+\infty, \quad \mbox{otherwise},
\end{cases}$$
we have $\bar x \to -\xi$ as $\epsilon \to 0$, and $a\in(0,a_*)$, while $\bar x \to 0$ as $\epsilon \to 0$, and $a>a^*$.  In the particular case where $f=-\frac{\mu'}{2}$, we have $a_*=a^* =\sqrt{2}$.
\end{step}
\begin{proof}
Let us consider a sequence $\epsilon_n \to 0$, let $a>0$, and suppose that $\bar x_n:=\bar x_{\epsilon_n,a}\to l \in [-\xi,\xi]$, as $n\to \infty$ (cf. \eqref{soph}).
We rescale $v$ by setting $\tilde v_n(s)=v_{\epsilon_n,a}(\bar x_{n}+s\epsilon_n)$. Clearly, $\tilde v''_n(s)=\epsilon_n^2 v''_{\epsilon_n,a}(\bar x_{n}+s\epsilon_n)$. As a consequence of Lemma \ref{s3} and \eqref{ode}, the functions $\tilde v_n$ are uniformly bounded up to the second derivatives. Thus, we can apply the theorem of Ascoli, via a diagonal argument, and show that for a subsequence still called $\tilde v_n$,
$\tilde v_{n}$ converges in $C^1_{\mathrm{ loc}}(\R)$ to a function $\tilde V$. Now, we are going to determine $\tilde V$.
For this purpose, we introduce the rescaled energy
\begin{equation}
\label{functres}
\tilde E_n(\tilde u)=\int_{\R}\Big(\frac{1}{2}|\tilde u'(s)|^2-\frac{1}{2}\mu(\bar x_n+s\epsilon_n)\tilde u^2(s)+\frac{1}{4}|\tilde u|^4(s)-\epsilon_n a f(\bar x_n+s\epsilon_n)\tilde u(s)\Big)\dd s=E(u_n),\nonumber
\end{equation}
where we have set $\tilde u(s)=u_{n}(\bar x_{n}+s\epsilon_n)$ i.e. $u_n(x)=\tilde u\big(\frac{x-\bar x_n}{\epsilon_n}\big)$.
Let $\tilde \xi$ be a test function with support in the compact interval $J$. We have $\tilde E_{n}(\tilde v_{n}+\tilde \xi,J)\geq \tilde E_{n}(\tilde v_{n},J)$, and at the limit
$G_{0}( \tilde V+\tilde\xi,J)\geq  G_{0}(\tilde V,J)$, where $$ G_{0}(\phi,J)=\int_{J}\left[\frac{1}{2}|\phi'|^2-\frac{1}{2}\mu(l)\phi^2+\frac{1}{4}|\phi |^4\right],$$
or equivalently $G( \tilde V+\tilde\xi,J)\geq G(\tilde V,J)$, where
\begin{equation}\label{gl}
G(\phi,J)=\int_{J}\left[\frac{1}{2}|\phi'|^2-\frac{1}{2}\mu(l)\phi^2+\frac{1}{4}|\phi|^4+\frac{(\mu(l))^2}{4}\right]
=\int_{J}\left[\frac{1}{2}|\phi'|^2+\frac{1}{4}(\phi^2-\mu(l))^2\right].
\end{equation}
Thus, we deduce that $\tilde V$ is a bounded minimal solution of the O.D.E. associated to the functional \eqref{gl}:
\begin{equation}\label{odegl}
\tilde V''(s)-(\tilde V^2(s)-\mu(l))\tilde V(s)=0,
\end{equation}
and since we have $\tilde V(0)=0$, and $\tilde V(s)\geq 0$, $\forall s \geq 0$, we obtain $\tilde V(s)=\sqrt{\mu(l)}\tanh(s\sqrt{\mu(l)/2})$. 
So far we have proved that 
\begin{equation}
\lim_{n\to \infty} v(\bar x_n+\ve_n s)=\sqrt{\mu(l)}\tanh(s\sqrt{\mu(l)/2}), \text{ in the $C^1_{\mathrm{loc}}$ sense}.
\end{equation}
Similarly, one can show that 
\begin{equation}\label{nn3}
\lim_{n\to \infty} v(x+\ve_n s)=\begin{cases}\sqrt{\mu(x)}  &\text{for } l<x<\xi, \\
-\sqrt{\mu(x)}  &\text{for } -\xi<x<l, \\
0 &\text{for } |x| \geq \xi,
\end{cases}\text{ in the $C^1_{\mathrm{loc}}$ sense}.
\end{equation}

Next, we compute a lower bound of the renormalized energy of $v_n$, by examining each integral appearing in the definition of $\mathcal E$ (cf. \eqref{renorm}). 
In view of Lemma \ref{s3} and \eqref{nn3}, we have by dominated convergence
\begin{equation}
\lim_{n\to \infty} -\int_\R a f v_n= \int_{-\xi}^l a f \sqrt{\mu} -\int_l^\xi a f \sqrt{\mu}. \nonumber
\end{equation}  
On the other hand, it is clear that
\begin{equation}
0\leq\int_{|x|>\xi}\frac{v_n^2(v_n^2-2\mu)}{4\epsilon},\nonumber
\end{equation}  
and
\begin{equation}
\int_{\R}\Big(\frac{\epsilon}{2}|v_{n}'|^2+\chi_{(-\xi,\xi)}\frac{(v^2_{n}-\mu)^2}{4\epsilon}\Big) 
=\int_{\R}\Big(\frac{1}{2}|\tilde v_{n}'(s)|^2+\chi_{(-(\xi+\bar x_n)\epsilon^{-1}_n,(\xi-\bar x_n)\epsilon^{-1}_n)}(s)\frac{(\tilde v^2_{n}(s)-\mu( \bar x_n+ s\epsilon_n))^2}{4}\Big)\dd s=:L_n,  \nonumber
\end{equation} 
where $\chi$ is the characteristic function. Finally, by Fatou's Lemma, we obtain
\begin{align}
\liminf_{n\to \infty}L_n &\geq \int_{\R} \liminf_{n\to \infty}
\Big(\frac{1}{2}|\tilde v_{n}'(s)|^2+\chi_{(-(\xi+\bar x_n)\epsilon^{-1}_n,(\xi-\bar x_n)\epsilon^{-1}_n)}(s)\frac{(\tilde v^2_{n}(s)-\mu( \bar x_n+ s\epsilon_n))^2}{4}\Big) =\frac{2\sqrt{2}}{3}(\mu(l))^{3/2}.\nonumber
\end{align}
Thus,
\begin{align*}
\liminf_{n\to \infty} \mathcal{E}(v_{n})\geq\frac{2\sqrt{2}}{3}(\mu(l))^{3/2}+\int_{-\xi}^l a f \sqrt{\mu} -\int_l^\xi a f \sqrt{\mu}.
\end{align*}

To conclude, we are going to compare the above lower bound with the upper bound \eqref{qua}, and deduce the convergence of the zero of the minimizer according to the value of $a$.
We first check that $a_*>0$. Let $\psi: [-\xi,0]\ni x \mapsto \psi (x)= \frac{\sqrt{2}}{3}(\mu(x))^{3/2}-a \int_{-\xi}^x |f|\sqrt{\mu}$. There exists $a_1$ such that for $0<a<a_1$ we have $\psi'> 0$ on a small interval $(-\xi,-\xi+\gamma]$, with $\gamma>0$. Also, there exists $a_2$ such that for $0<a<a_2$, we have $\psi > 0$ on $[-\xi+\gamma,0]$. Thus, we can see that $a_*\geq\min(a_1,a_2)$. Now, if the minimizers $v_n$ are nonnegative on $[0,\infty)$, it follows that $l \in [-\xi,0]$, and that $\liminf_{n\to \infty} \mathcal{E}(v_{n})\geq\frac{2\sqrt{2}}{3}(\mu(l))^{3/2}-2\int_{-\xi}^l a |f| \sqrt{\mu}>0$, for $l \in (-\xi,0]$ and $a \in (0,a_*)$. In view of \eqref{qua} in Step \ref{s6}, this situation does not occur, hence $\bar x_{\epsilon,a} \to -\xi$ as $\epsilon \to 0$, and $a\in(0,a_*)$. Similarly,
$\liminf_{n\to \infty} \mathcal{E}(v_{n})\geq\frac{2\sqrt{2}}{3}(\mu(l))^{3/2}-2\int_{-\xi}^l a |f| \sqrt{\mu}>\frac{2\sqrt{2}}{3}(\mu(0))^{3/2}-2\int_{-\xi}^0 a |f| \sqrt{\mu}$, for $l \in [-\xi,0)$ and $a >a^*$. Again, by  \eqref{qua}, this situation does not occur, hence $\bar x_{\epsilon,a} \to 0$ as $\epsilon \to 0$, and $a>a^*$. When $f=-\frac{\mu'}{2}$, an easy computation shows that $a_*=a^* =\sqrt{2}$.
\end{proof}
\begin{step}\label{s7}(Proof of \eqref{cvn1} and \eqref{cvn2})
\end{step}
\begin{proof}
We proceed as in Step \ref{s6}.
For fixed $a \geq 0$, and $\epsilon_n \to 0$, we consider the sequence of global minimizers $v_n:=v_{\epsilon_n,a}$, and rescale them by setting $\tilde v_n(s)=v(x+\epsilon_n s)$. Since the rescaled sequence $\tilde v_n$ is uniformly bounded up to the second derivatives (cf. Lemma \ref{s3}), we obtain the convergence in $C^1_{\mathrm{loc}}$ of a subsequence to a minimal solution $\tilde V$ of the O.D.E. $\tilde V''=W'(\tilde V)$. According to the shape of the potential $W$, and to the location of the zero of $v$, we deduce that $\tilde V$ is either a constant or a heteroclinic connection (cf. \cite{antonop}). Finally, since the limit  $\tilde V$ is independent of the sequence $\epsilon_n$, we obtain the convergence in \eqref{cvn1} and \eqref{cvn2}, as $\epsilon\to 0$.
\end{proof}

\section{Proof of Theorems \ref{th2n} and \ref{th3n}}

\setcounter{step}{0}

\begin{step}\label{s3b}(Uniform bounds)
\begin{lemma}\label{l2}
For $\epsilon\ll 1$ and $a$ belonging to a bounded interval, let $u_{\epsilon,a}$ be a solution of \eqref{ode} converging to $0$ at $\pm\infty$. Then, there exist a constant  $K>0$ such that 
\begin{equation}\label{boundd}
|u_{\ve,a}(x)|\leq K(\sqrt{\max(\mu (x),0)}+\ve^{1/3}), \quad \forall x\in \R.
\end{equation}
As a consequence, the rescaled functions $\tilde u_{\epsilon,a}^\pm(s)=\pm\frac{u_{\ve,a}(\pm\xi\pm s\epsilon^{2/3})}{\epsilon^{1/3}}$ are uniformly bounded on the intervals $[s_0,\infty)$, $\forall s_0\in\R$. 
\end{lemma}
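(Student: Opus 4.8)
The plan is to prove the pointwise bound \eqref{boundd} by a maximum-principle/comparison argument, and then read off the uniform bound on the rescaled functions $\tilde u^\pm$ by directly substituting \eqref{boundd}. By symmetry under $u\mapsto -u$, $f\mapsto -f$ (with $a\ge 0$ fixed) it suffices to bound $u$ from above, i.e. to show $u(x)\le K(\sqrt{\mu^+(x)}+\epsilon^{1/3})$, where $\mu^+=\max(\mu,0)$. Recall from Lemma \ref{s3} that $\|u\|_{L^\infty}\le M$ with $M$ independent of $\epsilon,a$ on bounded sets, so only the behaviour where the claimed bound is small, namely near and beyond the zeros $\pm\xi$ of $\mu$, requires real work.

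First I would record the elementary consequence of \eqref{ode} at an interior extremum. If $x_0$ is a local maximum of $u$ with $u(x_0)=t>0$, then $u''(x_0)\le 0$ forces $t^3\le \mu^+(x_0)t+\epsilon a\|f\|_{L^\infty}$; splitting into the cases $t\le 2\sqrt{\mu^+(x_0)}$ or (otherwise) $\mu^+(x_0)t\le t^3/4$ so that $t^3\le \tfrac43\epsilon a\|f\|_{L^\infty}$, one gets in either case $u(x_0)\le 2\sqrt{\mu^+(x_0)}+C_0\epsilon^{1/3}$, and the analogous bound holds at negative local minima. The second, and main, mechanism is a convexity observation: set $\sigma:=(4\epsilon a\|f\|_{L^\infty})^{1/3}=O(\epsilon^{1/3})$ and $g:=\sqrt{2\mu^+}+\sigma$. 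Wherever $u>g$ one has $u^2>2\mu^+\ge 2\mu$ and $u>\sigma$, so \eqref{ode} gives $\epsilon^2 u''=u(u^2-\mu)-\epsilon a f\ge \tfrac12 u^3-\epsilon a\|f\|_{L^\infty}>0$; hence $u$ is strictly convex on $\Omega:=\{u>g\}$.

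I would then conclude $u\le g$ (which is exactly the desired bound, with $K$ a fixed multiple of $\sqrt 2$) as follows. Since $u\to 0$ at $\pm\infty$ while $g\ge\sigma>0$, the set $\Omega$ is bounded, so each of its components $(\alpha,\beta)$ is finite with $u(\alpha)=g(\alpha)$, $u(\beta)=g(\beta)$; as $u$ is convex there it lies below the chord joining these endpoint values, and if $g$ is concave on $[\alpha,\beta]$ that chord lies below $g$, contradicting $u>g$ inside. On $\{\mu\le 0\}$ one has $g\equiv\sigma$ (concave), and on the illuminated side of a zero of $\mu$ the function $\sqrt{\mu^+}$ is concave because the zero is simple ($\mu'(\xi)=\mu_1<0$, so $\sqrt{\mu}\sim\sqrt{|\mu_1|(\xi-x)}$); this rules out components in the shadow zone and in one-sided neighbourhoods of $\pm\xi$, giving $u=O(\epsilon^{1/3})$ on $\{\mu\le C\epsilon^{2/3}\}$, in particular $u(\pm\xi)=O(\epsilon^{1/3})$. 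In the bulk $\{\mu\ge m_0>0\}$ the same comparison, or directly the extremum inequality combined with the monotonicity of $\mu$ on $(0,\infty)$ (which propagates the bound from a right-hand local maximum leftward, where $\mu^+$ is larger), yields $u\le K\sqrt{\mu^+}$. The stated consequence is then immediate: for $x=\xi+s\epsilon^{2/3}$ with $s\ge s_0$ one has $\mu^+(x)\le \sup|\mu'|\,|s_0|\,\epsilon^{2/3}=O(\epsilon^{2/3})$, so \eqref{boundd} gives $|\tilde u^+(s)|=|u(x)|/\epsilon^{1/3}\le K(1+\sqrt{\sup|\mu'|\,|s_0|})$, uniformly in $\epsilon$.

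The main obstacle is precisely the transition region $|x\mp\xi|\lesssim\epsilon^{2/3}$ at the simple zeros of $\mu$. There $\sqrt{\mu^+}$ is only continuous (square-root singular), $g$ has a convex corner, and a component of $\Omega$ could straddle $\pm\xi$, so the clean ``convex $u$ lies below a concave barrier'' dichotomy breaks down exactly at the scale where the $\epsilon^{1/3}$ boundary layer forms. Closing the estimate there forces the two sides of the comparison — the Thomas–Fermi bound $u\approx\sqrt{\mu^+}$ from the illuminated side and the $O(\epsilon^{1/3})$ bound from the shadow — to be matched simultaneously at $\pm\xi$, using the $\epsilon^{2/3}$–$\epsilon^{1/3}$ Painlev\'e scaling of \eqref{ode} (the very rescaling in the statement) rather than any single global barrier. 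Everything else — the extremum inequality, the convexity on $\Omega$, and the final substitution — is routine.
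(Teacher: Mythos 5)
Your proposal assembles the right ingredients --- the reduction by the odd symmetry $\hat u(x)=-u(-x)$, the extremum inequality from \eqref{ode}, and strict convexity of $u$ in the region above the barrier $\sqrt{2\mu^+}+\sigma$ (all of which do appear in the paper's proof, where convexity in the region $D$ plays the same role) --- but it does not prove the lemma, and you say so yourself: in the transition region $|x\mp\xi|\lesssim\epsilon^{2/3}$ you only name the obstacle (a component of $\{u>g\}$ straddling $\pm\xi$, where $g$ has a convex corner) and gesture at ``matching via the Painlev\'e scaling'' without an argument. That is precisely where the lemma has content: the $O(\epsilon^{1/3})$ bound at and beyond $\pm\xi$ is the whole point, since elsewhere Lemma \ref{s3} plus routine comparison already suffices. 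Moreover, the obvious patches fall short quantitatively. On $\Omega=\{u>g\}$ one has $\epsilon^2u''\ge\tfrac12 u^3-\epsilon a\|f\|_{L^\infty}\ge \epsilon a\|f\|_{L^\infty}$, so components of $\Omega$ have length $O(\sqrt{\epsilon})$; but $\sqrt{\epsilon}\gg\epsilon^{2/3}$, so a component reaching $\xi$ may start at distance $\sqrt\epsilon$ from it, where $\sqrt{\mu^+}\sim\epsilon^{1/4}$ --- the chord/convexity bookkeeping alone yields only $u(\xi)=O(\epsilon^{1/4})$, strictly weaker than \eqref{boundd}. A secondary gap: your concavity of $\sqrt{\mu^+}$ on the illuminated side is only local near $\xi$; under \eqref{hyp3} $\mu$ is merely $C^1$, and even for $\mu=e^{-x^2}-\chi$ with $\chi$ small, $\sqrt{\mu}$ fails to be concave somewhere in $(0,\xi)$, so the global ``concave barrier'' dichotomy is not available even in the bulk without extra work (the extremum inequality plus monotonicity of $\mu$ does rescue the bulk, but this needs to be argued, since the supremum of $u$ on $[x,\infty)$ may be attained at $x$ itself with no interior local maximum to the right).

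What the paper does to close exactly this gap is the part missing from your proposal. First, it restores concavity by \emph{linearizing} $\mu$ at $\xi$: choosing $k$ with $4\mu(\xi+h)<-kh<8\mu(\xi+h)$ near $\xi$ and $\lambda$ with $\lambda k\delta\ge M^2$, it compares $u$ not with $\sqrt{\mu^+}$ but with the genuinely concave curve $\Gamma(x)=\sqrt{\lambda k(\xi-x)}+(4\epsilon aF)^{1/3}$; equivalently, it exploits convexity of $\bigl(u-(4\epsilon aF)^{1/3}\bigr)^2$ against the \emph{linear} function $\lambda k(\xi-x)$ to show (claim \eqref{claim1c}) that once $u(x_0)\ge\sqrt{8\lambda\mu(x_0)}+(4\epsilon aF)^{1/3}$ for some $x_0<\xi$, then $u>\sqrt{4\mu}+(4\epsilon aF)^{1/3}$ on all of $[x_0,\xi]$. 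Second --- and this is the step that upgrades $\epsilon^{1/4}$ to the sharp $\epsilon^{1/3}$ --- it derives on $[x_0,l]$ the differential inequality $\epsilon^2u''-\tfrac12u^3\ge 0$, deduces that $4\epsilon^2|u'|^2-u^4$ is decreasing, hence $-u'/\bigl(u-(4\epsilon aF)^{1/3}\bigr)^2\ge\tfrac1{2\epsilon}$, and integrates over $[x_0,\xi-\epsilon^{2/3}]$ to force $\xi-x_0\le K'\epsilon^{2/3}$: the exceedance region has width $O(\epsilon^{2/3})$, so $u\le\sqrt{8\lambda\mu(\xi-K'\epsilon^{2/3})}+(4\epsilon aF)^{1/3}=O(\epsilon^{1/3})$ there and beyond. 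Your final substitution giving the uniform bound on $\tilde u^{\pm}$ on $[s_0,\infty)$ is correct and matches the paper, but it is contingent on \eqref{boundd}, whose proof in the critical region is the first-integral argument you omitted.
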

\end{step}
\begin{proof}
For the sake of simplicity we drop the indexes and write $u:=u_{\epsilon,a}$. Let $ M>0$ be the constant such that $|u_{\epsilon,a}|$ is uniformly bounded by $M$ (cf. Lemma \ref{s3}), and let $k>0$ be such that $4 \mu(\xi+h)<-kh<8\mu(\xi+h)$, for $h \in (\xi -\delta,\xi)$ (with $\delta>0$ small). Next, define $\lambda>1$ such that $\lambda k \delta\geq M^2$. Finally, let $F:=\sup f$.
To prove the uniform upper bound for $x \geq 0$, we utilize the strict convexity of $u$ in the region $$D:=\left\{(x,y)\in [0,\infty)\times [0,\infty): \, y>\sqrt{4\max(\mu(x),0)}+(4\epsilon a F)^{1/3}\right\}.$$ 
Indeed, one can see that for $x \geq 0$, the positive root $\sigma$ of the cubic equation $u^3-\mu(x)u-\epsilon a f(x)=0$, satisfies 
$\sigma (x)-\sqrt{\mu(x)}\leq |\epsilon  a f(x)|^{1/3}$, $\forall x\in [0,\xi]$, and $\sigma(x)\leq |\epsilon af(x)|^{1/3}, \forall x\geq \xi$.

Suppose there exists $x_0 \in [0,\xi-\epsilon^{2/3})$ such that $u(x_0)\geq \sqrt{8\lambda \mu(x_0)}+(4\epsilon a F)^{1/3}$. In view of what precedes we have $x_0 \in (\xi-\delta,\xi)$. In fact, we are going to show that $|\xi-x_0|\leq K'\epsilon^{2/3}$, for a constant $K'>0$.
Our claim is that
\begin{equation}\label{claim1c}
u(z)>\sqrt{ 4\mu(z)}+(4\epsilon a F)^{1/3}, \ \text{ for $x_0\leq z \leq \xi$.}
\end{equation}
Indeed, if $u(x_2)\leq\sqrt{ 4\mu(x_2)}+(4\epsilon a F)^{1/3}$, for some $x_2 \in (x_0,\xi]$, the curve $[0,\xi] \ni x \to \sqrt{k \lambda (\xi-x)}+(4\epsilon a F)^{1/3}$, denoted by $\Gamma$, separates the points $(x_0,u(x_0))$ and $(x_2,u(x_2))$. On the other hand, by construction, the curve $\Gamma$ separates also the points $(0,u (0))$ and $(x_0,u(x_0))$.
This implies the existence of an interval $[x_1,x_2]$, with $0<x_1<x_0<x_2\leq \xi$, such that
\begin{itemize}
\item $(x_i,u(x_i))$ belongs to $\Gamma$, and $\big(u-(4\epsilon a F)^{1/3}\big)^2(x_i)=\lambda k(\xi-x_i)$, for $i=1,2$,
\item  $(x,u(x))$ is above $\Gamma$, and $\big(u-(4\epsilon a F)^{1/3}\big)^2(x)\geq \lambda k(\xi-x)$, for $x \in [x_1,x_2]$,
\item $u$ and also $\big(u-(4\epsilon a F)^{1/3}\big)^2$ are convex in $[x_1,x_2]$ 
\end{itemize}
which is clearly impossible. Thus, \eqref{claim1c} holds, and as a consequence $u$ is convex in $ [x_0,\xi]$. Now, let
$l:=\min \{x>\xi:\, u(x)=(4\epsilon a F)^{1/3}\}$. Thanks again to the convexity of $u$ in the region $D$, we see that $$u(x)\leq (4\epsilon a F)^{1/3}, \ \forall x\geq l.$$
In addition, $u$ is convex and decreasing in the interval $[x_0,l]$, since $u'(l)\leq 0$.
Our second claim is that $$\epsilon^2u''-\frac{u^3}{2}=\frac{u^3}{2}-\mu u -\epsilon a f\geq 0, \text{ on the interval $[x_0,l]$}.$$
This is true for $x \in [\xi,l]$, since $\frac{u^3}{2}\geq 2\epsilon a f$, and $-\mu u\geq 0$. We also check that when $x \in[x_0,\xi]$:
$$u^2\geq 4\mu+(4\epsilon a F)^{2/3} \text{ (by \eqref{claim1c})} \Rightarrow \frac{u}{2}(u^2-2\mu)\geq \mu u+(4\epsilon a F)^{2/3}\frac{u}{2}\geq 2\epsilon a F,$$
which establishes the second claim. Next, we obtain on the interval $[x_0,l]$:
$\epsilon^2u''u'-\frac{u^3u'}{2}\leq 0,$  which implies that the function
$[x_0,l]\ni x \to  4\epsilon^2|u'|^2-u^4$ is decreasing. Furthermore,
$4\epsilon^2|u'|^2-u^4\geq - u^4(l)=-(4\epsilon a F)^{4/3},$
and on the interval $[x_0,l)$ we have:
$$4\epsilon^2|u'|^2\geq u^4-(4\epsilon a F)^{4/3}\geq (u-(4\epsilon a F)^{1/3})^4 \Rightarrow\frac{-u'}{(u-(4\epsilon a F)^{1/3})^2}\geq\frac{1}{2\epsilon}.$$
An integration of the latter inequality over the interval $[x_0,\xi-\epsilon^{2/3}]$ gives:
$\frac{\epsilon^{1/3}}{u(\xi-\epsilon^{2/3})-(4\epsilon a F)^{1/3}}\geq\frac{(\xi-\epsilon^{2/3}-x_0)}{2\epsilon^{2/3}}$, 
and since 
$\frac{u(\xi-\epsilon^{2/3})-(4\epsilon a F)^{1/3}}{\epsilon^{1/3}}>\sqrt{ \frac{4\mu(\xi-\epsilon^{2/3})}{\epsilon^{2/3}}}\geq K''>0$, by \eqref{claim1c},
we deduce that $\xi-x_0\leq K'\epsilon^{2/3}$, with $K'=1+\frac{2}{K''}$. As a consequence, we have proved the upper bounds:
\begin{equation}
u(x)\leq 
\begin{cases}
\sqrt{8\lambda \mu(x)}+(4\epsilon a F)^{1/3} &\text{ for } x \in [0,\xi-K'\epsilon^{2/3}],\medskip \\
\sqrt{8\lambda \mu(\xi-K'\epsilon^{2/3})}+(4\epsilon a F)^{1/3} &\text{ for } x \in [\xi-K'\epsilon^{2/3},\infty).
\end{cases}
\end{equation}
The proof of the upper bound for $x\leq0$ is similar and simpler, since instead of $D$, we can consider the region $$D':=\left\{(x,y)\in (-\infty,0]\times [0,\infty): \, y>\sqrt{4\max(\mu(x),0)}\right\},$$ where the solutions are strictly convex. Finally, the lower bound follows from the odd symmetry $\hat u(x)=-u(-x)$. This completes the proof of \eqref{boundd}. The uniform bounds for $\tilde u^\pm$ are straightforward.
\end{proof}
\begin{step}(Proof of Theorems \ref{th2n} and \ref{th3n})
\end{step}
\begin{proof} We rescale the global minimizers $v$ as in Lemma \ref{l2} by setting $\tilde v_{\epsilon,a}^\pm(s)=\pm\frac{v_{\epsilon,a}(\pm\xi\pm s\epsilon^{2/3})}{\epsilon^{1/3}}$.
Without loss of generality we consider them only in a neighborhood of $\xi$, and write $\tilde v:=\tilde v_{\epsilon,a}^+$.
Clearly $\tilde v''(s)=\epsilon v''(\xi+s\epsilon^{2/3})$, thus,
\begin{equation}\label{oderes1}
\tilde v''(s)+\frac{\mu(\xi+s\epsilon^{2/3})}{\epsilon^{2/3}} \tilde v(s)-\tilde v^3(s)+ a f(\xi+s\epsilon^{2/3})=0, \qquad \forall s\in \R.\nonumber
\end{equation}
Writing $\mu(\xi+h)=\mu_1 h+hA(h)$, with $\mu_1:=\mu'(\xi)<0$, $A \in C(\R)$, and $A(0)=0$, we obtain
\begin{equation}\label{oderes2}
\tilde v''(s)+(\mu_1  + A(s \epsilon^{2/3}))s \tilde v(s)-\tilde v^3(s)+ a f(\xi+s\epsilon^{2/3})=0,\qquad  \forall s\in \R.
\end{equation}
Next, we define the rescaled energy by
\begin{equation}
\label{functres2}
\tilde E(\tilde u)=\int_{\R}\Big(\frac{1}{2}|\tilde u'(s)|^2-\frac{\mu(\xi+s \epsilon^{2/3})}{2\epsilon^{2/3}}\tilde u^2(s)+\frac{1}{4}|\tilde u|^4(s)- a f(\xi+s \epsilon^{2/3})\tilde u(s)\Big)\dd s.
\end{equation}
With this definition $\tilde E(\tilde u)=\frac{1}{\epsilon}E(u)$.
From Lemma \ref{l2} and \eqref{oderes2}, it follows that $\tilde v''$, and also $\tilde v'$, are uniformly bounded on compact intervals. Thanks to these uniform bounds, we can reproduce the arguments in the proof of Theorem \ref{th1n}, to obtain the convergence of $\tilde v_{\epsilon}$ to a minimal solution solution $\tilde V $ of the O.D.E. 
\begin{equation}\label{oderes4}
\tilde V''(s)+\mu_1 s \tilde V(s)-\tilde V^3(s)+ a f(\xi)=0, \qquad \forall s\in \R,
\end{equation}
which is associated to the functional
\begin{equation}
\label{functres4}
\tilde E_0(\phi,J)=\int_{J}\Big(\frac{1}{2}|\phi'(s)|^2-\frac{\mu_1}{2} s \phi^2(s)+\frac{1}{4}\phi^4(s)- a f(\xi)\phi(s) \Big)\dd s.
\end{equation}
Setting $y(s):=\frac{1}{\sqrt{2}(-\mu_1)^{1/3}}\tilde V\big(\frac{s}{(-\mu_1)^{1/3}}\big)$, \eqref{oderes4} reduces to \eqref{pain} with $\alpha=\frac{af(\xi)}{\sqrt{2}\mu_1}$, and $y$ is still a minimal solution of \eqref{pain} bounded at $\infty$. 
By taking global minimizers $v$ nonnegative on $[0,\infty)$, it is clear that at the limit we obtain $\tilde V\geq 0$, and $y\geq 0$. Lemmas \ref{pp0}, \ref{pp0bb} and \ref{pp0b} whose proofs are postponed for now, show that actually $y$ is positive, strictly decreasing, and has the asymptotic behavior described in Theorem \ref{th3n} (i) (cf. \eqref{asy0} and \eqref{asy1}). Finally, if we take global minimizers $v$ nonpositive on $(-\infty,0]$, we know by Theorem \ref{th1n} (iv) that their zero $\bar x$ converges to $\xi$, as $\epsilon \to 0$. However, we are not aware if their limit $\tilde V$ also vanishes. If so, the minimal solution $y$ has a unique zero $\bar s$, and behaves asymptotically as in \eqref{asy2} (cf. Lemma \ref{pp0b}). Note that proving that $\tilde V$ vanishes is actually equivalent to establishing the bound $|\bar x_\ve-\xi|=O(\ve^{2/3})$. The proof of the theorems is complete except for the Lemmas describing the asymptotic behaviour of the solutions of the Painlev\'e equation. 
\end{proof}

\section{Some Lemmas for solutions of the O.D.E. \eqref{pain}}
In this section we show Lemmas \ref{pp0}, \ref{pp0bb} and \ref{pp0b} announced above.  We begin with:

\begin{lemma}\label{roots}
Let us consider, for $\alpha<0$, the cubic equation
\begin{equation}\label{cubic}
2y^3+ s y+\alpha=0, \qquad \forall s \in \R,
\end{equation}
and let $ s^*:=-6|\frac{\alpha}{4}|^{2/3}<0$. Then
\begin{itemize}
\item for $s> s^*$, \eqref{cubic} has a unique real root $\sigma_+(s)$, which is positive;
\item for $s=s^*$, \eqref{cubic} has a simple zero $\sigma_+(s^*)>0$, and a double zero $\sigma_-(s^*)=\sigma_0(s^*)=-|\frac{\alpha}{4}|^{1/3}<0$;
\item for $s< s^*$, \eqref{cubic} has three simple zeros: $\sigma_+(s)>0$, and $\sigma_-(s)<\sigma_0(s)<0$.
\end{itemize}
Moreover,
\begin{itemize}
\item[(i)] $\sigma_+'(s)<0$, $\forall s \in \R$;
\item[(ii)] $\sigma_+(s)<\frac{|\alpha|}{ s}$, for $s>0$, and $\sigma_+\sim\frac{|\alpha|}{ s}$ at $+\infty$; 
\item[(iii)] $\sigma_+(s)>\sqrt{| s|/2}$, for $s<0$, and $\sigma_+(s)=\sqrt{| s|/2}+o(1)$, at $-\infty$;
\item[(iv)] $\sigma_+$ is convex in $[0,\infty)$, and concave in a neighborhood of $-\infty$. 
\end{itemize}
Similarly,
\begin{itemize}
\item[(v)] the function $(-\infty,s^*] \ni s \to \sigma_-(s)$ is strictly increasing;
\item[(vi)] $\sigma_-(s)>-\sqrt{|s|/2}$, for $s\leq  s^*$, and $\sigma_-(s)=-\sqrt{| s|/2}+o(1)$, at $-\infty$;
\item[(vii)] $\sigma_-$ is convex in a neighborhood of $-\infty$. 
\item[(viii)] $\sigma_0(s) \to 0$ as $s \to -\infty$. 
\item[(ix)] $\sigma_0$ is decreasing and concave in a neighborhood of $-\infty$. 
\end{itemize}
\end{lemma}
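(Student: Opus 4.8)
The plan is to reduce every assertion to the elementary analysis of the single cubic $g(y):=2y^3+sy+\alpha$, treating $s$ as a parameter and $\alpha<0$ as fixed. First I would record that $g'(y)=6y^2+s$: for $s\ge 0$ the function $g$ is strictly increasing, so it has a unique real root, which is positive because $g(0)=\alpha<0$; for $s<0$ there is a local maximum at $y=-y_c$ and a local minimum at $y=+y_c$, where $y_c:=\sqrt{-s/6}$. A direct substitution using $s=-6y_c^2$ gives the critical values $g(\pm y_c)=\mp 4y_c^3+\alpha$, so the minimum value is always negative while the maximum value $4y_c^3+\alpha$ is positive, zero, or negative according to whether $s<s^*$, $s=s^*$, or $s^*<s<0$; the threshold is exactly the $s$ at which $4y_c^3=|\alpha|$, which rearranges to $s=-6|\alpha/4|^{2/3}=s^*$. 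This yields the claimed trichotomy for the number of roots and the identification of the double root $\sigma_-(s^*)=\sigma_0(s^*)=-y_c(s^*)=-|\alpha/4|^{1/3}$. Continuity of the three branches $\sigma_+>\sigma_0>\sigma_-$ as functions of $s$ follows from continuous dependence of simple roots on the coefficients.

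The second ingredient, which drives every monotonicity and convexity statement, is the sign of $D(\sigma):=6\sigma^2+s=g'(\sigma)$ on each branch. Since $\sigma_+$ and $\sigma_-$ are the outer roots they lie where $g$ is increasing, so $\sigma_\pm^2>-s/6$ and hence $D(\sigma_+),D(\sigma_-)>0$; the middle root satisfies $\sigma_0^2<-s/6$, so $D(\sigma_0)<0$. Implicit differentiation of $g(\sigma(s),s)=0$ gives $\sigma'=-\sigma/D$ and, after a second differentiation, $\sigma''=2s\sigma/D^{3}$. Then (i), (v) and the monotonicity half of (ix) are just a reading of the sign of $-\sigma/D$ on each branch (using $\sigma_+>0$, $\sigma_-<0$, $\sigma_0<0$ together with the sign of $D$), while (iv), (vii) and the convexity half of (ix) are a reading of the sign of $2s\sigma/D^3$; in particular the convexity of $\sigma_+$ on $[0,\infty)$ uses $s\ge 0$, $\sigma_+>0$, $D>0$.

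For the one-sided bounds (ii), (iii), (vi) I would evaluate $g$ at the candidate comparison value and invoke the monotonicity of $g$ past the relevant critical point. For $s>0$ one computes $g(|\alpha|/s)=2|\alpha|^3/s^3>0$, whence $\sigma_+<|\alpha|/s$; for $s<0$ the identity $2(-s/2)^{3/2}+s\sqrt{-s/2}=0$ gives $g(\pm\sqrt{-s/2})=\alpha<0$, and since $\sqrt{-s/2}>y_c$ both $\pm\sqrt{-s/2}$ lie on increasing arcs of $g$, yielding $\sigma_+>\sqrt{-s/2}$ and $\sigma_->-\sqrt{-s/2}$. The asymptotics follow from the bounded relation $2\sigma^3+s\sigma=|\alpha|$: for $\sigma_+$ at $+\infty$ the bound $\sigma_+<|\alpha|/s$ forces $\sigma_+\to 0$ and then $s\sigma_+\to|\alpha|$, so $\sigma_+\sim|\alpha|/s$; for $\sigma_\pm$ at $-\infty$ one writes $2\sigma^2+s=|\alpha|/\sigma\to 0$, so $\sigma^2\to -s/2$, and factoring $\sigma^2-(-s/2)=(\sigma\mp\sqrt{-s/2})(\sigma\pm\sqrt{-s/2})$ against the diverging second factor gives $\sigma_\pm=\pm\sqrt{-s/2}+o(1)$. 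Finally (viii) is pure Vieta: the vanishing quadratic coefficient gives $\sigma_++\sigma_0+\sigma_-=0$ and the constant term gives $\sigma_+\sigma_0\sigma_-=|\alpha|/2$, so $\sigma_0=(|\alpha|/2)/(\sigma_+\sigma_-)$ with $\sigma_+\sigma_-\to-\infty$, hence $\sigma_0\to 0$.

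The computations are entirely elementary, so there is no deep obstacle; the only point requiring a little care is the endpoint $s=s^*$ on the $\sigma_-$ branch, where $D(\sigma_-)\to 0$ and the implicit differentiation degenerates. There I would not differentiate but instead deduce strict monotonicity of $\sigma_-$ up to and including $s^*$ from its strict monotonicity on $(-\infty,s^*)$ together with continuity of the branch at the endpoint; since the convexity and asymptotic claims for $\sigma_-$, $\sigma_0$ are only asserted near $-\infty$, they are unaffected by the degeneracy at $s^*$.
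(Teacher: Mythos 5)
Your proof is correct, and it follows the same elementary outline as the paper's: the trichotomy of roots from the critical values of the cubic (your computation $g(\pm y_c)=\mp 4y_c^3+\alpha$ with $y_c=\sqrt{-s/6}$ gives exactly the threshold $s^*=-6|\alpha/4|^{2/3}$), implicit differentiation for the monotonicity statements, evaluation of the cubic at comparison points for (ii), (iii), (vi) (the paper uses the very same evaluations $g(|\alpha|/s)>0$ and $g(\pm\sqrt{|s|/2})=\alpha<0$), and the identity $\sigma(2\sigma^2+s)=|\alpha|$ for the asymptotics. You do, however, improve on the paper in three places. First, the paper proves (iv) indirectly by writing $\sigma_+'=-1/\psi$ with $\psi=4\sigma_+-\alpha/\sigma_+^2$ (note $\psi=D/\sigma_+$ in your notation) and showing $\psi$ is monotone, which near $-\infty$ requires the asymptotics of $\sigma_+$; your closed formula $\sigma''=2s\sigma/D^3$ reduces (iv), (vii) and the concavity in (ix) to a sign inspection and in fact yields concavity of $\sigma_+$ on all of $(-\infty,0)$ and convexity of $\sigma_-$ on all of $(-\infty,s^*)$. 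Second, for (viii) the paper merely says the $\sigma_0$ properties follow "in a similar way", whereas your Vieta argument $\sigma_++\sigma_0+\sigma_-=0$, $\sigma_+\sigma_0\sigma_-=|\alpha|/2$ gives (viii), and even the rate $\sigma_0\sim-|\alpha|/|s|$, in one line. Third, you explicitly handle the endpoint $s=s^*$, where $D(\sigma_-)=0$ and the implicit function theorem degenerates, extending the strict monotonicity in (v) to the closed interval by continuity — a point the paper passes over silently.
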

\begin{proof}
The first statement of the Proposition follows by studying the variations and the extrema of the polynomial in \eqref{cubic}. Let us prove the properties of $\sigma_+$. (i) By the implicit function theorem, it follows that $\sigma_+$ is differentiable. A computation shows that 
\begin{equation}\label{signsigma}
\sigma'_+=-\frac{1}{4\sigma_+-\frac{\alpha}{\sigma_+^2}}<0.
\end{equation}
Next, we notice that $2y^3+s y+\alpha >s y +\alpha \geq 0$, for $y\geq\frac{|\alpha|}{ s}$, with $s>0$, and this proves the inequality in (ii). Writing 
$\sigma_+(s)=\frac{|\alpha| }{2\sigma_+^2(s)+ s}$, we also obtain the equivalence in (ii). To see (iii), it is obvious that $2y^3+ s y+\alpha<0$, for $y=\sqrt{|s|/2}$, $s<0$. Thus, $\sigma_+(s)>\sqrt{|s|/2}$, for $s<0$. In addition, 
$$\sigma_+(s)-\sqrt{|s|/2}=\frac{|\alpha| }{2\sigma_+(s)(\sigma^+(s)+\sqrt{|s|/2})}=o(1).$$ (iv) Finally, we utilize again \eqref{signsigma}. Setting $\psi(s)=4\sigma_+(s)-\frac{\alpha }{\sigma_+^2(s)}$, we have $\psi'(s)=2\sigma'_+(s)\Big(2+\frac{\alpha}{\sigma^3_+(s)}\Big)<0$, as $s\to -\infty$, and $\psi'(s)>0$ for $s >0$. As a consequence, $\sigma'_+$ is decreasing (respectively increasing) in a neighborhood of $-\infty$ (resp. in $[0,\infty)$), and $\sigma_+$ is concave (resp. convex) in this neighborhood. The properties of $\sigma_-$ and $\sigma_0$ are established in a similar way.
\end{proof}

\begin{lemma}\label{pp0}
Let $\alpha<0$, and let $y$ be a solution of \eqref{pain}, bounded in a neighborhood of $+\infty$. Then, 
\begin{itemize}
\item[(i)] $y\geq \sigma_+$ in a neighborhood of $\infty$, 
\item[(ii)] $\sigma_-\leq y\leq \sigma_+$ in a neighborhood of $-\infty$, 
\item[(iii)] the function 
\begin{align}
\label{def-theta}
\theta (s)=|y'(s)|^2- s y^2(s)-y^4(s)-2 \alpha y(s) , \quad s\in\R
\end{align}
is decreasing, and converges to $0$ at $+\infty$.
\item[(iv)] $ y \sim \frac{|\alpha|}{s}$, as $s\to+\infty$.
\end{itemize} 
\end{lemma}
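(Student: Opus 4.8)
The plan is to read \eqref{pain} as $y''=2y^3+sy+\alpha$ and exploit the sign structure of the cubic from Lemma \ref{roots}: the right-hand side vanishes exactly on the graphs $\sigma_+,\sigma_0,\sigma_-$, is positive above the largest root $\sigma_+$ (and, for $s<s^*$, on $(\sigma_-,\sigma_0)$) and negative on $(\sigma_0,\sigma_+)$ and below $\sigma_-$. Thus $y$ is convex wherever it lies above $\sigma_+$ and concave just below it. Differentiating \eqref{def-theta} and using \eqref{pain} gives $\theta'=2y'(y''-sy-2y^3-\alpha)-y^2=-y^2\le 0$, which is immediate and already yields the monotonicity asserted in (iii); the convergence $\theta\to0$ will be extracted at the end.

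For (i) I would show that, for $s>s^*$ (where $\sigma_+$ is the unique real root and $\sigma_+'<0$), the set $\{y\ge\sigma_+\}$ is forward invariant. A strict downward crossing at $s_2$ forces $y'(s_2)\le\sigma_+'(s_2)<0$; since $y<\sigma_+$ gives $y''<0$, $y'$ stays below $y'(s_2)<0$ while $\sigma_+\to0^+$, so $y$ decreases strictly faster than $\sigma_+$, never returns, and $y\to-\infty$, contradicting boundedness at $+\infty$. It remains to exclude $y<\sigma_+$ on a whole ray $[s_1,\infty)$: there $y$ is concave, so $y'$ is non-increasing; if $y'<0$ once then $y\to-\infty$, while if $y'\ge0$ then $y$ increases to a limit $y_\infty\le\lim\sigma_+=0$, whence $y\le0$ and $y''=2y^3+sy+\alpha\le\alpha<0$, again forcing $y'\to-\infty$. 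Both are impossible, so $y\ge\sigma_+$ near $+\infty$. I will record here the companion fact $y\to0$: for fixed $c>0$ and $s\ge|\alpha|/c$ one has $y''\ge2c^3>0$ wherever $y\ge c$, so an excursion above $c$ is convex and therefore lies below the chord joining its endpoint values $c$ (a contradiction) or extends to a ray on which $y\to\infty$ (contradicting boundedness); hence $\limsup y\le c$ for every $c>0$, and together with $y\ge\sigma_+>0$ this gives $y\to0$.

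For (iv) the lower bound is free, since $y\ge\sigma_+$ and $\sigma_+\sim|\alpha|/s$ give $\liminf sy\ge|\alpha|$. For the upper bound I will use, for each $\delta>0$, the barrier $g_\delta(s)=(|\alpha|+\delta)/s$. A direct computation gives $sg_\delta+\alpha=\delta$, so on $\{y>g_\delta\}$ the function $w:=y-g_\delta$ satisfies $w''=2y^3+\delta+sw-g_\delta''\ge\delta/2>0$ for $s$ large, because $g_\delta''\to0$. A uniformly convex positive $w$ can occur neither on a bounded interval with vanishing endpoints (it would lie below the zero chord) nor on a ray (there $w\to+\infty$, contradicting $y\to0$); hence $y\le g_\delta$ near $+\infty$, so $\limsup sy\le|\alpha|+\delta$, and letting $\delta\to0$ proves $sy\to|\alpha|$, which is (iv). Convergence of $\theta$ then follows: (iv) gives $sy^2=(sy)\,y\to0$, $y^4\to0$ and $2\alpha y\to0$, so $\theta=(y')^2+o(1)$; being decreasing and bounded below, $\theta$ tends to $L\ge0$, whence $(y')^2\to L$, and $L>0$ would force $y'$ of one sign with $|y'|$ bounded away from $0$, contradicting $y\to0$; thus $L=0$ and $\theta\to0$.

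The genuinely delicate part is (ii), which cannot use boundedness (here $y$ may grow like $\sqrt{|s|/2}$) and must instead use that $y$ is a \emph{global}, hence pole-free, solution on $\R$. I would rule out an excursion with $y>\sigma_+$ reaching into a neighbourhood of $-\infty$ by a finite-time blow-up argument: from $2y^3+sy+\alpha=(y-\sigma_+)\big[2(y^2+y\sigma_+ +\sigma_+^2)+s\big]$ and $\sigma_+^2\sim|s|/2$ (Lemma \ref{roots}(iii)) one gets $y''\gtrsim|s|(y-\sigma_+)$, which first drives $y$ up polynomially and then, once $y^3$ dominates $|s|y$, yields $y''\ge y^3$; writing $t=-s$ and integrating $\ddot y\,\dot y\ge y^3\dot y$ produces $\dot y\gtrsim y^2$ and hence a pole at finite $t$, contradicting global existence. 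The symmetric argument below $\sigma_-$ (where $y$ is concave and $-y$ solves an equation of the same type) gives $y\ge\sigma_-$. The main obstacle, and the step I expect to require the most care, is organising the monotone trapping that keeps the solution in the relevant region ($y>\sigma_+$, resp. $y<\sigma_-$) long enough for the super-cubic growth to force the pole; here the concavity of $\sigma_+$ near $-\infty$ (Lemma \ref{roots}(iv)) and a careful crossing analysis in the variable $t=-s$ will be needed.
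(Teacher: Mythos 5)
Your arguments for (i), (iii) and (iv) are correct, and in places more economical than the paper's. For (i) the paper first produces a sequence $s_n\to+\infty$ with $y(s_n)\ge\sigma_+(s_n)$, excluding a terminal ray below $\sigma_+$ via the function $\theta$ of \eqref{def-theta} and a decay estimate $y=\mathcal O(e^{-\frac23 s^{3/2}})$, and then closes with the same convexity comparison you use (note your invariance argument implicitly needs the convexity of $\sigma_+$ on $[0,\infty)$, Lemma \ref{roots}(iv), to prevent an upward re-crossing); your direct exclusion of the ray via $y\le 0\Rightarrow y''\le\alpha<0$ is shorter than the paper's. For (iv) the paper argues by contradiction, showing $2y^3+sy+\alpha\ge(\lambda-1)|\alpha|$ on intervals $[s_k/\lambda,s_k]$ of diverging length so that $\int y''$ diverges; your barrier $g_\delta=(|\alpha|+\delta)/s$ with the uniformly convex excursion function $w=y-g_\delta$ is a clean alternative, and your derivation of $\theta\to 0$ from (iv) is sound.

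In (ii), however, there is a genuine gap, and it sits exactly where you flagged it. The inequality $y''\ge 2|s|\,(y-\sigma_+)$ does \emph{not} by itself ``drive $y$ up polynomially'': the forcing degenerates as $y\downarrow\sigma_+$, and a profile with $y-\sigma_+\sim e^{-\frac{2\sqrt 2}{3}|s|^{3/2}}$ (the decaying Airy-type solution of the linearization $\epsilon''=2|s|\epsilon$) is perfectly compatible with your differential inequality, so a priori the solution could slide onto $\sigma_+$ from above and never enter the super-cubic regime. What rules this out --- and what simultaneously supplies the sign information you need to multiply by $\dot y$ and integrate --- is boundedness at $+\infty$: if $y>\sigma_+$ on $(-\infty,m]$ and $y'(s_0)\ge 0$ at some $s_0\le m$, then, since $\sigma_+$ is strictly decreasing (Lemma \ref{roots}(i)), $y$ stays above $\sigma_+$ and remains convex on all of $[s_0,\infty)$, forcing $y\to+\infty$ at $+\infty$, a contradiction. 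Hence $y'<0$ on $(-\infty,m]$, and convexity gives $y'(s)\le y'(m)<0$ there, i.e.\ \emph{linear} growth of $y$ as $s\to-\infty$; this is the actual engine that beats the $\mathcal O(\sqrt{|s|})$ threshold, ensuring $y^3+2sy+4\alpha\ge 0$ for $s\le m'$. The paper then avoids your two-stage bootstrap altogether by invoking $\theta\ge 0$ from (iii): $|y'|^2-\frac{y^4}{2}\ge\frac{y}{2}\bigl(y^3+2sy+4\alpha\bigr)\ge 0$, hence $-y'/y^2\ge 1/\sqrt 2$, i.e.\ $(1/y)'\ge 1/\sqrt 2$, whose integration over $[s,m']$ gives the backward pole you are after ($1/y(m')\ge (m'-s)/\sqrt 2\to\infty$). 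So your blow-up mechanism is the right one, but without the trapping step it does not close. Finally, to get the full statement ``$y\le\sigma_+$ in a neighborhood of $-\infty$'' you must still carry out the excursion comparison you only allude to: between consecutive touching points, $y-\sigma_+$ is convex ($y$ convex above $\sigma_+$, $\sigma_+$ concave near $-\infty$), positive inside and zero at the endpoints, which is impossible; the bound $y\ge\sigma_-$ follows identically using the convexity of $\sigma_-$ near $-\infty$ (Lemma \ref{roots}(vii)).
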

\begin{proof}
(i) Our first claim is that there exists a sequence $s_n \to +\infty$ such that $y(s_n)\geq \sigma_+(s_n)$ Assume by contradiction that this is not true.
Then, $y< \sigma_+$ on some interval $[m,\infty)$, where $y $ is also concave. Since $y$ is bounded on $[m,\infty)$, we deduce that $\lim_{+\infty} y'=0$, and $y' \geq 0$ on $[m,\infty)$. 
Furthermore, $\lim_{+\infty} y $ exists, and $ y  < 0$ on $[m,\infty)$. Now, we notice that by \eqref{pain}, $\theta'(s)=-y ^2\leq 0$, and thus $\theta$ is decreasing. This implies in particular that 
$\lim_{s\to+\infty}s y ^2(s)=l \in [0,\infty]$. If $l \neq 0$, it follows from \eqref{pain} that $\lim_{s\to+\infty} y ''(s)=-\infty$, which is impossible, since $ y $ is bounded in a neighborhood of $+\infty$.
Therefore, $\lim_{s\to+\infty}s y ^2(s)=0$, and $\theta(s) \geq 0$, $\forall s \in \R$. As a consequence, we have
$ | y '(s)|^2 \geq sy ^2(s)$, and $ -\frac{ y '}{ y }\geq \sqrt{ s}$ for $s>0$.
Integrating this inequality, we obtain that $ y (s)=\mathcal O(e^{-\frac{2}{3}s^{3/2}})$ at $+\infty$. By \eqref{pain} again, we conclude that 
$\lim_{s\to+\infty} y ''(s)=\alpha $, which contradicts the fact that $ y $ is bounded at $+\infty$. This establishes our first claim. To finish the proof of (i), let us assume that $ y (t)<\sigma_+(t)$, for some $t>s_k$, with $s_k$ such that $\sigma_+$ is convex on $[s_k,\infty)$. It follows that there exists an interval $[a,b]$ such that
\begin{itemize}
\item $s_k\leq a<t<b \leq s_l$ (for some $l>k$), 
\item $ y (a)=\sigma_+(a)$, $ y (b)=\sigma_+(b)$, and $ y (s)<\sigma_+(s)$, $\forall s \in (a,b)$.
\end{itemize}
Clearly, this is impossible since $\sigma_+- y $ is convex on $[a,b]$. Thus, we have proved that $ y \geq \sigma_+$ in a neighborhood of $+\infty$, where $ y $ is also convex.
Furthermore, by repeating the previous arguments, we obtain that $\lim_{+\infty} y '=0$ and $\lim_{s\to+\infty}s y ^2(s)=0$. 
Then, (iii) follows immediately. 

(ii)  We proceed as in (i). To show that $ y \leq \sigma_+$ in a neighborhood of $-\infty$, we first establish the existence of a sequence $s_n \to -\infty$ such that $ y (s_n)\leq \sigma_+(s_n)$. Assume by contradiction that this is not true.
Then, $ y >\sigma_+$ on some interval $(-\infty,m]$, where $ y $ is also convex. In addition, $ y '(s)<0$, $\forall s \leq m$, since otherwise $ y $ would be convex on all $\R$, and $\lim_{+\infty} y =+\infty$. As a consequence, there exists $m'<m$, such that $ y ^3(s)+2s  y (s)+4\alpha \geq 0$, $\forall s \leq m'$. Indeed, the positive root of the polynomial $ y ^3(s)+2s  y (s)+4\alpha $ is of order $\mathcal O(\sqrt{|s|})$ at $-\infty$. Next, in view of (iii), we obtain
$| y '(s)|^2-\frac{ y ^4(s)}{2}\geq \frac{ y (s)}{2}( y ^3(s)+2s y (s)+4\alpha )\geq 0$, $\forall s\leq m'$. An integration of the inequality $-\frac{ y '}{ y ^2}\geq\frac{1}{\sqrt{2}}$ over the interval $[s,m']$ gives
$\frac{1}{ y (m')}\geq\frac{1}{ y (m')}-\frac{1}{ y (s)}\geq\frac{m'-s}{\sqrt{2}}$, and letting $s\to -\infty$, we obtain a contradiction. This proves the existence of the sequence $s_n$. To deduce that $ y \leq \sigma_+$ in a neighborhood of $-\infty$, just repeat the convexity argument in (i). Finally, the proof of the bound $ y \geq \sigma_-$ is identical.

(iv) Let $\lambda>1$ be fixed, let $[m,\infty)$ be an interval where $ y $  is convex, and suppose there exists a sequence $m <s_k\to\infty$ such that $ y (s_k)>\lambda^2\frac{|\alpha |}{s_k}$. We notice that the inequality $\lambda^2\frac{|\alpha |}{ s_k}\geq \lambda\frac{|\alpha|}{s}$ holds for $s \geq\frac{s_k}{\lambda}$. Since $ y $ is decreasing on $[m,\infty)$, it follows that $ y (s)\geq \lambda\frac{|\alpha |}{s}$ for $s \in \big[\max \big(m,\frac{s_k}{\lambda}\big), s_k\big]$. In particular, by Lemma \ref{roots} (ii), we obtain on each interval $\big[\max \big(m,\frac{s_k}{\lambda}\big), s_k\big]$:
$$  2 y^3(s)+ s  y (s)+\lambda\alpha >0\Leftrightarrow  2y ^3(s)+ s  y (s)+\alpha>(\lambda-1)|\alpha | $$ 
since the positive root of the cubic equation $ 2y^3+s y+\lambda\alpha =0$ is smaller than $\lambda\frac{|\alpha |}{s}$. As a consequence $\int_m^\infty y ''(s)\dd s=\int_m^\infty ( 2y ^3(s)+s  y (s)+\alpha)\dd s=\infty$, which is a contradiction. Thus, we have proved  that for every $\lambda >1$, there exists a neighborhood of $+\infty$ where $\sigma^+\leq y \leq  \lambda^2\frac{|\alpha |}{s}$. This implies that  $ y \sim \frac{|\alpha |}{s}$, as $s\to+\infty$.
\end{proof} 

\begin{lemma}\label{pp0bb}
Let $\alpha=0$, and let $ y \geq 0$ be a minimal solution of \eqref{pain}, bounded at $\infty$. Then, $y$ coincides with the solution described in Theorem \ref{th3n} (i): it is positive, strictly decreasing, and satisfies \eqref{asy0}.

\end{lemma}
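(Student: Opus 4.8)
The plan is to read off every qualitative property of $y$ from the equation and the minimality, proceeding roughly as in Lemma~\ref{pp0}. First I would record that $y\not\equiv0$: with $\alpha=0$ one has $E_{\mathrm{P_{II}}}(\phi,\supp\phi)=\int(\tfrac12|\phi'|^2+\tfrac12 s\phi^2+\tfrac12\phi^4)$, which is negative for a fixed bump $\phi$ translated far into $\{s<0\}$, so the trivial solution $y\equiv0$ is not minimal. Since $y\geq0$ solves \eqref{pain}, a zero of $y$ would be a minimum, forcing $y=y'=0$ there and hence $y\equiv0$ by ODE uniqueness; thus $y>0$ on $\mathbb R$. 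For $s>0$ we have $y''=sy+2y^3>0$, so $y$ is convex and, being bounded on $[0,\infty)$, nonincreasing there with $y'\to0^-$ and $y\to0$ (a positive limit would force $y''\gtrsim s\to\infty$, contradicting $y'\to0$). To promote this to $y'<0$ on all of $\mathbb R$ I would differentiate \eqref{pain}: $p:=y'$ solves $p''=(s+6y^2)p+y$, so at any zero $s_*$ of $p$ one gets $p''(s_*)=y(s_*)>0$. Hence every zero of $p$ is a strict local minimum at height $0$, which makes it impossible for $p$ to vanish to the left of any point where it is negative; since $p<0$ near $+\infty$, this forces $p<0$ throughout.

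Next I would study $\theta(s)=|y'|^2-sy^2-y^4$, the $\alpha=0$ case of \eqref{def-theta}, which satisfies $\theta'=-y^2\le0$. Using $y,y'\to0$ at $+\infty$ and ruling out $\lim sy^2>0$ (which through \eqref{pain} would make $y''$ non‑integrable near $+\infty$, contradicting $\int_s^\infty y''=-y'(s)\to0$), I obtain $sy^2\to0$, hence $\theta\to0$ at $+\infty$; being decreasing, $\theta\ge0$ everywhere, i.e. $|y'|^2\ge sy^2+y^4$.

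The behaviour at $-\infty$ is the heart of the matter. For the upper bound I would argue as in Lemma~\ref{pp0}(ii): if $y>\sqrt{|s|/2}$ on a whole half‑line $(-\infty,m]$, then $y$ is convex there and, being decreasing, grows at least linearly, so eventually $y\ge\sqrt{2|s|}$; but then $|y'|^2\ge sy^2+y^4\ge\tfrac12 y^4$ gives $-y'/y^2\ge 1/\sqrt2$, and integrating over $(-\infty,m]$ contradicts the finiteness of $1/y(m)$. This produces a sequence $s_n\to-\infty$ with $y(s_n)\le\sqrt{|s_n|/2}$, and since $\sqrt{|s|/2}$ is concave while $y$ is convex wherever it exceeds it, the bound $y\le\sqrt{|s|/2}$ propagates to a neighbourhood of $-\infty$. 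Being strictly decreasing with $y\le\sqrt{|s|/2}$, $y$ is then concave near $-\infty$ and $y\to+\infty$ (a finite limit would force $y''\to-\infty$, hence $y'\to+\infty$, contradicting $y'<0$), while concavity forces $y'\to0^-$. Writing $2y^2=|s|(1-\rho)$ with $\rho\in[0,1)$, the equation becomes $y''=-|s|\rho\,y$, so $y'(s)=-\int_{-\infty}^s|t|\rho(t)y(t)\,\dd t$; finiteness of $y'(s)$ together with $|t|y(t)\to\infty$ forces $\rho\to0$, i.e. $y\sim\sqrt{|s|/2}$. The step I expect to be most delicate is precisely this last one — upgrading the integrability of $|t|\rho y$ to the full limit $\rho\to0$ (equivalently, ruling out a persistent order‑one deficit below $\sqrt{|s|/2}$). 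Here minimality provides a robust alternative: on any long interval on which $y\le(1-\delta)\sqrt{|s|/2}$ one may push $y$ up to the pointwise minimiser $\sqrt{|s|/2}$ of the frozen potential $W_s(u)=\tfrac12 su^2+\tfrac12 u^4$, lowering $E_{\mathrm{P_{II}}}$ by $\gtrsim\delta^2 s^2\cdot(\text{interval length})$ against a transition cost of order $s^2$, contradicting minimality once the interval is long enough.

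Finally, at $+\infty$ the cubic term is subordinate, so $y$ solves a perturbation of Airy's equation and a variation‑of‑parameters argument gives $y\sim k\,\mathrm{Ai}(s)$ for some $k>0$; the connection classification for \eqref{pain} with $\alpha=0$ — only $k=1$ matches the separatrix $\sqrt{|s|/2}$ at $-\infty$, which we have just established — forces $k=1$, giving \eqref{asy0}. For uniqueness I would use that $\min$ and $\max$ of two nonnegative minimal solutions are again minimal, hence smooth, which forces any two such solutions to be ordered. If $y_1>y_2$, then $w=y_1-y_2>0$ solves $w''=c(s)w$ with $c=s+2(y_1^2+y_1y_2+y_2^2)$, and $w\to0$ at both ends (superpolynomially at each end, by the asymptotics above), so integrating against $w$ yields $\int(w')^2+c\,w^2=0$. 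Since $c-(s+6y_2^2)=2(y_1-y_2)(y_1+2y_2)>0$, and the second variation of $E_{\mathrm{P_{II}}}$ at the minimal solution $y_2$ gives $\int(w')^2+(s+6y_2^2)w^2\ge0$, this is a contradiction; hence $y_1\equiv y_2$.
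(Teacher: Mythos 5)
Your outline is workable, but two steps contain genuine gaps, the first of which affects a stated conclusion of the lemma. In the monotonicity step you observe that $p=y'$ solves $p''=(s+6y^2)p+y$, so $p''(s_*)=y(s_*)>0$ at any zero $s_*$ of $p$, and you conclude that every zero of $p$ is a strict local minimum. That inference is valid only if $p'(s_*)=0$: a transversal zero with $p'(s_*)\neq 0$ is entirely compatible with $p''(s_*)>0$, so nothing you have said prevents $p$ from changing sign, and the claim $y'<0$ on all of $\R$ is not established. The step is repairable, but only by invoking minimality, which your argument does not use here: from $-p''+(s+6y^2)p=-y<0$, on any bounded component $\Omega$ of $\{p>0\}$, multiplying by $p$ and integrating (the boundary terms vanish since $p=0$ on $\partial\Omega$) gives $\int_\Omega \bigl(|p'|^2+(s+6y^2)p^2\bigr)=-\int_\Omega yp<0$, contradicting the second-variation inequality \eqref{secondvar}. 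A second gap of the same nature sits in your uniqueness argument: to get $\int\bigl(|w'|^2+c\,w^2\bigr)=0$ for $w=y_1-y_2$, and to apply \eqref{secondvar} to the non-compactly supported $w$, you need quantitative decay of $w$ at $-\infty$, where $c(s)\sim 2|s|\to\infty$; but the asymptotics you have actually proved give only $w(s)=o(1)$ there. The expected decay $w=O(e^{-\frac{2\sqrt{2}}{3}|s|^{3/2}})$, coming from the linearization about $\sqrt{|s|/2}$, is plausible but is an unproven extra step, not a consequence of ``the asymptotics above.''

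It is also worth noting that, despite its length, your proof still rests on the Hastings--McLeod connection theory at the end (only $k=1$ in $y\sim k\,\mathrm{Ai}$ connects to $\sqrt{|s|/2}$ at $-\infty$), and this citation is exactly where the paper's much shorter proof both starts and finishes. The paper proves positivity as you do, shows as in Lemma \ref{pp0} that $\theta(s)=|y'(s)|^2-sy^2(s)-y^4(s)$ decreases to $0$, whence $y(s)\le y(0)e^{-\frac{2}{3}s^{3/2}}$ for $s\ge 0$, and then uses minimality exactly once: inserting $\phi(s)=\phi_0(s+h)$ with $h\to\infty$ into \eqref{secondvar} shows that a solution bounded near $-\infty$ cannot be minimal, so $y$ does not tend to $0$ at $-\infty$. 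The classification in \cite{MR555581} of global solutions of \eqref{pain} decaying at $+\infty$ then identifies $y$ with the Hastings--McLeod solution, which yields the strict monotonicity, both asymptotics in \eqref{asy0}, and the uniqueness simultaneously. Your direct arguments at $-\infty$ (the comparison with the concave barrier $\sqrt{|s|/2}$, and the cut-and-paste estimate gaining $\gtrsim \delta^2 s^2$ per unit length against an $O(s^2)$ transition cost) appear essentially sound and would have independent value, e.g.\ in regimes where no classification is available, but in the present lemma they duplicate what the citation provides while the two gaps above remain.
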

\begin{proof}
Let us show that $y>0$. If $y(s_0)=0$ for some $s_0 \in \R$, then $y$ has a local minimum at $s_0$, and $y \equiv 0$ by the uniqueness result for O.D.E. But this is excluded since a solution of \eqref{pain} which is bounded in a neighborhood of $-\infty$, is not minimal. To see this, we recall that for a minimal solution $y$, the second variation of the energy is nonnegative:
\begin{equation}\label{secondvar}
\int_\R (|\phi'(s)|^2+(6 y ^2(s)+s)\phi^2(s))\dd s \geq 0, \forall \phi\in C^1_0(\R),
\end{equation}
Clearly \eqref{secondvar} does not hold when $y$ is bounded and we take $\phi(s)=\phi_0(s+h)$, with $h\to\infty$, and $\phi_0\in C^1_0(\R)$ fixed. 
We also notice that $\lim_{s \to \infty}y'(s)=0$, and $y'(s)\leq 0$, $\forall s \geq 0$, since $y$ is convex and bounded on $[0,\infty)$. To obtain the asymptotic convergence at $+\infty$, we establish, as in Lemma \ref{pp0} (iii), 
that the function $\theta (s)=|y'(s)|^2- s y^2(s)-y^4(s)$ is decreasing, and converges to $0$ at $+\infty$.
As a consequence, $-\frac{y'}{y}\geq \sqrt{ s}, \ \forall s \geq 0$, and thus $y(s)\leq y(0)e^{-\frac{2}{3}s^{3/2}}, \ \forall s \geq 0 $.
Now, we refer to \cite{MR555581} where a complete classification of the solutions of \eqref{pain} converging to $0$ at $+\infty$ is established. 
It is known that among these solutions, only the one described in Theorem \ref{th3n} (i) does not converge to $0$ at $-\infty$. Clearly, $y$ does not converge to $0$ at $-\infty$, since it is not bounded by minimality, thus $y$ coincides with the aforementioned solution.
\end{proof}

\begin{lemma}\label{pp0b}
Let $\alpha<0$, and let $ y $ be a solution of \eqref{pain}, bounded at $\infty$. Then, 
\begin{itemize}
\item[(i)] if $ y  \geq 0$, we have $ y >0$, $ y '<0$, and $ y (s)=\sqrt{|s|/2}+o(1)$, as $s\to-\infty$.
\item[(ii)] if $ y  $ is minimal and vanishes at $\bar s$, we have $ y (s)>0 \Leftrightarrow s>\bar s$, $ y (s)<0 \Leftrightarrow s<\bar s$, and $ y (s)=-\sqrt{|s|/2}+o(1)$, as $s\to-\infty$.
\end{itemize} 
\end{lemma}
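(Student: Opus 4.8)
The plan is to handle the two parts separately; in each I would first fix the sign of $y$, then its monotonicity, and finally extract the behaviour at $-\infty$ by comparing $y$ with the branches $\sigma_\pm$ of the cubic \eqref{cubic} supplied by Lemma \ref{roots}.

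For part (i) I would first upgrade $y\geq 0$ to $y>0$: if $y(s_0)=0$ then $s_0$ is a global minimum, so $y'(s_0)=0$ and $y''(s_0)\geq 0$, whereas \eqref{pain} gives $y''(s_0)=\alpha<0$, a contradiction. Near $-\infty$ Lemma \ref{pp0}(ii) yields $0<y\leq\sigma_+$, and since $0<y<\sigma_+$ forces $2y^3+sy+\alpha<0$, equation \eqref{pain} shows $y''\leq 0$ there, so $y$ is concave near $-\infty$. Concavity makes $y'$ non-increasing, hence $\lim_{s\to-\infty}y'(s)=A\in(-\infty,+\infty]$ exists; $A=+\infty$ would force $y\to-\infty$, and a finite $A<0$ would give linear growth incompatible with $y\leq\sigma_+\sim\sqrt{|s|/2}$, so $A=0$ and $y'\leq 0$ near $-\infty$. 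A finite limit $y\to L$ is excluded because then $y''=sy+2y^3+\alpha\to-\infty$, which would force $y'\to+\infty$; hence $y\to+\infty$. Global monotonicity $y'<0$ then follows by excluding interior critical points: at an interior local minimum \eqref{pain} gives $y\geq\sigma_+$, but the subsequent local maximum forced by $y\to 0$ at $+\infty$ (Lemma \ref{pp0}(iv)) would have to lie below the decreasing $\sigma_+$ and above that minimum value, which is impossible.

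The asymptotics $y(s)=\sqrt{|s|/2}+o(1)$ is the main obstacle. The upper bound is free from $y\leq\sigma_+=\sqrt{|s|/2}+o(1)$ (Lemma \ref{roots}(iii)); the issue is the matching lower bound, i.e. that $w:=\sigma_+-y\geq 0$ tends to $0$. Differentiating \eqref{cubic} and subtracting from \eqref{pain} gives $w''=\sigma_+''+w\,B$ with $B=2(y^2+y\sigma_++\sigma_+^2)+s$. From $2\sigma_+^3+s\sigma_++\alpha=0$ one gets $\sigma_+^2=|s|/2+|\alpha|/(2\sigma_+)$, whence $B\geq 2\sigma_+^2+s=|\alpha|/\sigma_+>0$, while $\sigma_+''=\mathcal O(|s|^{-3/2})\to 0$ by Lemma \ref{roots}(iv). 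Consequently $w$ can have no positive local maximum with value bounded below near $-\infty$: at such a maximum $w''\leq 0$ would give $w\leq|\sigma_+''|/B=\mathcal O(|s|^{-1})\to 0$. A monotone limit $w\to L>0$ is likewise impossible, since then $y\sim\sigma_+$ gives $B\sim 2|s|$ and $w''\sim 2|s|L\to+\infty$, contradicting the boundedness of $w$. Hence $w\to 0$. I expect the quantitative bookkeeping on $B$ and $\sigma_+''$ to be the delicate point here.

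For part (ii) I would use minimality decisively. By Lemma \ref{pp0}(iv) any solution bounded at $+\infty$ satisfies $y\sim|\alpha|/s>0$, so $y>0$ near $+\infty$; and exactly as in Lemma \ref{pp0bb} a minimal solution cannot be bounded near $-\infty$, since taking $\phi$ supported far to the left in \eqref{secondvar}, where $6y^2+s\to-\infty$, would make the second variation negative. At the largest zero $\bar s$ one must have $y'(\bar s)>0$, for otherwise $y'(\bar s)=0$ together with $y''(\bar s)=\alpha<0$ would make $\bar s$ a strict local maximum with value $0$, contradicting $y>0$ on $(\bar s,\infty)$; thus $y>0$ on $(\bar s,\infty)$. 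To rule out further zeros I would test \eqref{secondvar} on a putative finite excursion of $y$ between consecutive zeros $p<q$ with $\phi=y\chi_{[p,q]}$: multiplying \eqref{pain} by $y$ and integrating gives $\int_p^q(|\phi'|^2+(s+6y^2)\phi^2)=\int_p^q(4y^4-\alpha y)$, which on a negative excursion equals $\int_p^q(4y^4-|\alpha||y|)<0$ for a shallow bump, violating minimality; combined with the sandwich $\sigma_-\leq y\leq\sigma_+$ and the unboundedness at $-\infty$ this pins down a single zero, with $y<0$ on $(-\infty,\bar s)$. Finally, on $(-\infty,\bar s)$ the function $z=-y>0$ solves \eqref{pain} with the positive parameter $-\alpha>0$, so the tracking argument of part (i), applied with the mirror statements of Lemma \ref{roots} for positive parameter, yields $z\sim\sqrt{|s|/2}$, i.e. $y(s)=-\sqrt{|s|/2}+o(1)$. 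Here the genuinely delicate point is excluding multiple sign changes and deep bumps purely from minimality.
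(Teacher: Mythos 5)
Your part (i) is correct in outline but takes a genuinely different route from the paper for the asymptotics at $-\infty$. The paper first gets $y'<0$ everywhere by the dichotomy $y\geq\sigma_+$ (else $y$ becomes convex on a right half-line, contradicting boundedness) versus $0<y<\sigma_+$ (else $y$ becomes concave on a left half-line, contradicting $y>0$); then, using that $y$ is concave near $-\infty$ (the cubic is negative for $0<y<\sigma_+$) so that $y'$ is monotone and \emph{bounded}, it argues that if $y(s_k)<\sqrt{|s_k|/2}-\lambda$ along a sequence, then $y\leq\sqrt{|s|/2}$ on intervals of fixed length around $s_k$, where $y''=2y^3+sy+\alpha\leq\alpha<0$, whence $\int^{s_1}_{-\infty}y''=-\infty$, contradicting the boundedness of $y'$. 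Your linearization $w=\sigma_+-y$, $w''=\sigma_+''+wB$ with $B\geq 2\sigma_+^2+s=|\alpha|/\sigma_+>0$, is a workable alternative, but two points you flag or omit do need filling: Lemma \ref{roots}(iv) only gives the \emph{sign} of $\sigma_+''$, not the rate $\mathcal O(|s|^{-3/2})$ (this is, however, easily derived from $\sigma_+'=-\sigma_+/(6\sigma_+^2+s)$ and $6\sigma_+^2+s=4\sigma_+^2+|\alpha|/\sigma_+\sim 2|s|$); and your monotone-limit analysis treats only $w\to L\in(0,\infty)$, not $w\to+\infty$ (e.g.\ $y\to 0$), though the same computation gives $wB$ bounded below by a positive constant there, hence $w''\geq c>0$ and quadratic growth of $w$, contradicting $w\leq\sigma_+=\mathcal O(|s|^{1/2})$. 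Also, in your derivative-limit step, a finite $A>0$ must be excluded alongside $A=+\infty$ (both force $y\to-\infty$). With these repairs, (i) stands as a different and arguably more quantitative proof than the paper's.

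The genuine gap is in part (ii), at exactly the point you flagged: excluding additional zeros. Your second-variation test $\phi=y\chi_{[p,q]}$ gives $\int_p^q\bigl(4y^4-\alpha y\bigr)=\int_p^q\bigl(4y^4-|\alpha|\,|y|\bigr)$ on a negative excursion, which is negative only where $|y|^3<|\alpha|/4$; a \emph{deep} excursion between consecutive zeros is not excluded, and the fallback ``sandwich $\sigma_-\leq y\leq\sigma_+$ plus unboundedness'' is not an argument — the sandwich holds only near $-\infty$ and is fully compatible with sign changes. The paper closes this with a first-variation comparison that is depth-independent: if $y<0$ on $(\bar s',\bar s)$ with $y(\bar s')=y(\bar s)=0$, replace $y$ by $|y|$ there; the terms $\frac12|u'|^2+\frac s2 u^2+\frac12 u^4$ in $E_{\mathrm{P_{II}}}$ are unchanged while $\alpha\int u$ strictly decreases (since $\alpha<0$ and $|y|>y$ on the excursion), contradicting minimality. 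Combined with the observation that any zero with $y'=0$ is a local maximum at level $0$ (as $y''=\alpha<0$ there) — so that $y<0$ immediately to the left of the largest zero $\bar s$ — this pins down the unique zero and the sign pattern at once. You should replace your shallow-bump computation by this $|y|$-comparison. Note also that you skip the paper's truncation step showing $\lim_{s\to-\infty}y=-\infty$ (comparison of $y$ with $\min(y,l)$ on an excursion, using that the potential on $[l,0]$ is minimized at $l$ for $s$ very negative); in your scheme this is not fatal, since once the sign is settled your tracking argument applied to $z=-y$ (which solves \eqref{pain} with parameter $-\alpha>0$, with $0\leq z\leq-\sigma_-$ near $-\infty$ by Lemma \ref{pp0}(ii)) delivers $y(s)=-\sqrt{|s|/2}+o(1)$ directly. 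So the one irreducible hole is the deep-excursion exclusion, and minimality must enter through the first variation, not the second.
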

\begin{proof}
(i)  If $ y (s_0)=0$ for some $s_0 \in \R$, then $ y ''(s_0)\geq 0$, in contradiction with \eqref{pain} that gives  $ y ''(s_0)=\alpha <0$. Thus, $ y >0$. To show that $ y '<0$, we notice, that $ y (s)\geq\sigma_+(s)\Rightarrow  y '(s)<0$. Indeed, if $ y (s)\geq\sigma_+(s)$, and $ y '(s)\geq 0$, then $ y $ would be strictly convex in the interval $(s,+\infty)$, since $\sigma_+'<0$, and this would contradict the boundedness of $ y $ in $[s,+\infty)$. Similarly, we have that $0< y (s)<\sigma_+(s)\Rightarrow  y '(s)<0$. Here again, $0< y (s)<\sigma_+(s)$, and $ y '(s)\geq 0$, imply that $ y $ is strictly concave in the interval $(-\infty,s]$, in contradiction with $ y >0$. Now, let $\lambda>0$ be fixed,  and suppose there exists a sequence $s_k\to-\infty$ such that $0\leq  y (s_k) <\sqrt{| s_k|/2}-\lambda$. Since $ y '$ is bounded (in view of the bound $0\leq y \leq \sigma_+$, and the concavity of $y$), we notice that $0\leq  y (s) \leq \sqrt{|s_k|/2}\leq \sqrt{|s|/2}$, for $s\in [s_k-l,s_k]$, with $l$ independent of $k$. In particular, by Lemma \ref{roots} (iii), we obtain on each interval $[s_k-l, s_k]$:
$$ 2 y ^3(s)+ s  y (s)+\alpha \leq \alpha. $$ 
As a consequence $\int^{s_1}_{-\infty} y ''(s)\dd s=\int^{s_1}_{-\infty} ( 2y ^3(s)+ s  y (s)+\alpha )\dd s=-\infty$, which is a contradiction. Thus, we have proved  that for every $\lambda >0$, there exists a neighborhood of $-\infty$ where $\sigma^+\geq y \geq  \sqrt{|s|/2}-\lambda$. This implies that  $ y = \sqrt{| s|/2}+o(1)$, as $s\to-\infty$.

(ii) If $ y $ is minimal and vanishes at $\bar s$, it is easy to see that this zero is unique. Indeed, if $ y $ also vanishes at $\bar s'<\bar s$, we have $ y \geq 0$ on $[\bar s',\bar s]$, since otherwise we would obtain $E_{\mathrm{P_{II}}}( y ,[\bar s',\bar s])> E_{\mathrm{P_{II}}}(| y |,[\bar s',\bar s])$. It follows that $\bar s$ is a local minimum of $ y $ in contradiction with \eqref{pain}. Another consequence of the minimality of $ y $, is the inequality \eqref{secondvar}, which implies that $ y $ is not bounded at $-\infty$ (cf. Lemma \ref{pp0bb}). Let $l<0$ be fixed, and let $s_k\to-\infty$ be a sequence such that $ y (s_k)<l$. We notice that $\min_{u \in [l,0]}\big(\frac{1}{2}u^4+\frac{s}{2}u^2+\alpha u\big)$ is attained for $y=l$, when $s<s_i$, with $|s_i|$ large enough. Thus, if $ y (s) >l$ for some $s<s_i$, we can find an interval $[a,b]$ containing $s$, such that $ y (a)= y (b)=l$, and $\tilde E_0( y ,[a,b])>\tilde E_0(\min( y ,l),[a,b])$, which is a contradiction. This proves that $ y (s) \leq l$ for $s<s_i$ i.e. $\lim_{-\infty} y =-\infty$. It also follows that $ y $ is convex in a neighborhood of $-\infty$, since $\sigma_-\leq y\leq \sigma_0$. Utilizing the convexity of $y$, one can establish as in (i) that $ y (s)=-\sqrt{|s|/2}+o(1)$, as $s\to-\infty$.
\end{proof}

\bigskip
{\noindent}
{\bf Acknowledgement.} We would like to thank William Troy and Stuart Hastings for  observations  that helped  us  to implement  some important improvements in the present version of this work.



\providecommand{\bysame}{\leavevmode\hbox to3em{\hrulefill}\thinspace}
\providecommand{\MR}{\relax\ifhmode\unskip\space\fi MR }
\providecommand{\MRhref}[2]{%
  \href{http://www.ams.org/mathscinet-getitem?mr=#1}{#2}
}
\providecommand{\href}[2]{#2}

\end{document}